\newtheorem{theorem}{Theorem}[section]
\newtheorem{lemma}[theorem]{Lemma}
\newtheorem{proposition}[theorem]{Proposition}
\theoremstyle{definition} % corpo do documento em romano
\newtheorem{definition}[theorem]{Definition}
\providecommand{\keywords}[1]{\textbf{\textit{Keywords: }} #1}
\newcommand{\ah}{\ensuremath{{}^{\alpha}\! H}}
\DeclareMathOperator{\id}{id}
\DeclareMathOperator{\sgn}{sgn} 
\title{On some $H$-Galois objects which are distinguished by their polynomial $H$-identities}
\author{Waldeck Sch\"utzer\footnote{Department of Mathematics, UFSCar, Brazil, \texttt{waldeck@dm.ufscar.br}} \ and Abel Gomes de Oliveira\footnote{Department of Mathematics, UFSCar, Brazil, \texttt{abelgomes@dm.ufscar.br}, sponsored by CAPES.}}
\date{}
\begin{document}
\maketitle
 
\begin{abstract}
\noindent When $k$ is an algebraically closed field of characteristic 0 and $H$ is a non-semisimple monomial Hopf algebra, we show that all Galois objects over $H$ are determined up to $H$-comodule algebra isomorphism by their polynomial $H$-identities, extending a previous result in Kassel \cite{kassel}.
\end{abstract}
\keywords{Hopf algebra, comodule algebra, Galois object, polynomial identity}

\section*{Introduction}

This paper contributes to the well-known question concerning whether the set of polynomials identities distinguishes PI-algebras (associative unital algebras over a field $k$ satisfying a nontrivial polynomial identity) up to isomorphism. For instance, it follows from the celebrated Amitsur-Levitsky theorem that the standard polynomial of degree $2n$ distinguishes the finite-dimensional central simple associative algebras over an algebraically closed field $k$ up to isomorphism. %
%Counter examples exist in some contexts, for instance when $k$ is not algebraically closed:
When $k$ is not algebraically closed, the situation can be quite different: the quaternions $\mathbb{H}$ are a central simple algebra of dimension $4$ over $\mathbb{R}$ and $\mathbb{C}\otimes_{\mathbb{R}}\mathbb{H}\cong\mathbb{C}\otimes_{\mathbb{R}}M_2(\mathbb{R})$, hence $\mathbb{H}$ and $M_2(\mathbb{R)}$ have the same set of polynomial identities, but they are obviously not isomorphic as algebras. %Also, when $A$ is a non-simple algebra, then the non-isomorphic algebras $A$ and $A\oplus A$ have the same set of polynomial identities.

When $k$ is algebraically closed and the $k$-algebras are ``simple'', various results have settled the isomorphism question in the affirmative (here the meaning of simple depends on the full structure of the algebra). For example, Kushkulei and Razmyslov \cite{kush} on simple Lie algebras, Drensky and Racine \cite{drensky} on simple Jordan algebras, Koshlukov and Zaicev \cite{koshlukov} on simple associative algebras graded by an abelian group, and Aljadeff and Haile \cite{aljadeffhaile} extending this result to any group. Also, Shestakov and Zaicev \cite{shestakov} on arbitrary finite dimensional simple algebras and, more recently, Bahturin and Yasumura \cite{felipe} on arbitrary (semigroup) simple graded algebras.

Extending the notion of graded polynomial identity, Kassel \cite{kassel} introduced a notion of polynomial $H$-identity, $H$ being an arbitrary Hopf algebra, and considered the $H$-comodule algebras, namely $k$-algebras which are also right $H$-comodules and the right coaction is compatible with the multiplication. When $H=kG$ is the group Hopf algebra of a group $G$, the $H$-comodule algebras are essentially the same as the $G$-graded associative algebras and the polynomial $H$-identities become the usual $G$-graded polynomial identities. More specifically, Kassel studied certain $H$-comodule algebras which are cleft extensions of the ground field $k$, namely the $H$-Galois objects, when $H$ is a (generalized) Taft algebra or the Hopf algebra $E(n)$, and showed that these objects are distinguished by their polynomial $H$-identities.

Following Kassel \cite{kassel}, here we consider the case when $H$ is a non-semisimple monomial Hopf algebra and prove the $H$-Galois objects are distinguished by their sets of polynomial $H$-identities.
%
%In our context, the precise question is: ``On what conditions, having the same set of polynomial H-identities guarantee isomorphism of H-comodule algebras?''. We will answer that if the field is algebraically closed of characteristic 0 and the H-comodule algebras are Galois objects over a non-semisimple monomial Hopf algebra H, then the answer is positive, that is, they are isomorphic as H-comodule algebras.
%

The non-semisimple monomial Hopf algebras were classified by Chen et al. \cite{chen} and include the Taft algebras. This classification associates to each such an algebra the notion of ``group datum'' which will be described below. In turn, Bichon \cite{bichon} further classified the various group data into six different types and used this to determine, up to isomorphism, the $H$-Galois objects for the monomial Hopf algebras according to their associated group data. Our work draws from this classification.

As it turns out, the ``generalized'' Taft algebras are precisely the type I in Bichon's classification of the non-semisimple monomial Hopf algebras, and the isomorphism question for the corresponding $H$-Galois objects has already been settled in \cite{kassel}. Here we set out to investigate types II through VI.

Throughout this paper, $k$ denotes an algebraically closed field of characteristic 0.

\section{Preliminaries}

We shall follow closely the basic notations, definitions and results on Hopf algebras which are found for instance in \cite{montgomery} and \cite{dascalescu}. In particular, we assume that the reader is familiar with concepts such as Hopf algebras, $H$-comodule algebras, cleft extensions, and the Heyneman-Sweedler-type notation for the comultiplication $\Delta\colon H\rightarrow H\otimes H$ and the coaction $\delta \colon M\rightarrow M\otimes H$ for a (right) $H$-comodule $M$. As usual, the counit is $\varepsilon \colon  H\rightarrow k$. Concerning the polynomial $H$-identities we keep the same notations and definitions from \cite{kassel}. These will be briefly recalled in this section.

\subsection{$H$-comodule algebras and $H$-Galois objects}\label{galois sec}
We recall the definition of the twisted Hopf algebra $\ah$.  If $H$ is a Hopf algebra and
%$\alpha \colon H \times H \longrightarrow k$
$\alpha\in Z^2(H,k^{\times})$
 is a normalized convolution invertible (right) 2-cocycle then the algebra $\ah$ is defined as an algebra $\mathcal{V}_H$ with the same vector space structure of $H$ and whose multiplication given by
\[
v_xv_y=\alpha(x_1,y_1)v_{x_2 y_2}
\] 
where the $v$-symbols are the images of the elements of $H$ under the linear isomorphism $x \mapsto v_x$ onto $\mathcal{V}_H$, and $x_1\otimes x_2$ is the Heyneman-Sweedler notation for $\Delta (x)=\sum_{i}x_{i1}\otimes x_{i2}$.
The cocycle condition
\[
 \alpha(x_1,\,y_1)\alpha(x_2y_2,\,z) = \alpha(y_1,\,z_1)\alpha(x,\,y_2z_2),
\]
for all $x,y,z\in H$, is responsible for the associativity of the multiplication in $\ah$, and the normalization
\[
\alpha(x,1) = \alpha(1,x)=\varepsilon(x),
\]
for all $x\in H$, implies that $1_{\ah}=u_1$. Moreover, $\ah$ is an $H$-comodule algebra with coaction $\delta \colon \ah \longrightarrow \ah \otimes H$ given by
\[
\delta(v_x)=v_{x_1}\otimes x_2.
\]  

Next we recall that an $H$-comodule algebra $A$ (with coaction $\rho \colon A \longrightarrow A \otimes H$) is called a (right) $H$-Galois object if the subalgebra of coinvariants $A^{coH}=\{ a \in A \mid \rho (a)=a \otimes 1_H \}$ is isomorphic to $k$ and the map 
\[
\begin{array}{rccc}
\beta \colon&  A\otimes A &\longrightarrow & A \otimes H \\
  &a\otimes b &\longmapsto & (a\otimes 1_H)\rho(b)
\end{array}
\]
is a linear isomorphism.
%The $H$-Galois objects are characterized as the $H$-cleft extensions of the ground field $k$ (\cite[\S1]{masuoka}).

The next two propositions show that the $H$-Galois objects over finite dimensional Hopf algebras are equivalent to the twisted $H$-comodule algebras $\ah$ for some 2-cocycle $\alpha$.  The proofs are based on the discussions found in chapters 7 and 8 of \cite{montgomery}.

\begin{proposition}Let $\alpha$ be a normalized convolution invertible 2-cocycle. Then $\ah$ is an $H$-Galois object.
\end{proposition}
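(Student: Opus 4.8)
The plan is to verify the two conditions in the definition of an $H$-Galois object directly for $A=\ah$ with coaction $\rho=\delta$. For the coinvariants, observe that the linear isomorphism $\varphi\colon H\to\ah$, $x\mapsto v_x$, intertwines the comultiplication $\Delta$ (regarded as the map making $H$ a right $H$-comodule over itself) with $\delta$, since $\delta(v_x)=v_{x_1}\otimes x_2=(\varphi\otimes\id)\Delta(x)$. Hence $\varphi$ carries $H^{coH}$ isomorphically onto $(\ah)^{coH}$, so it suffices to recall that $H^{coH}=k1_H$: if $\Delta(a)=a\otimes 1_H$, then applying $\varepsilon\otimes\id$ and the counit axiom yields $a=\varepsilon(a)1_H$. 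Since $\varphi(1_H)=v_1=1_{\ah}$ (using $\Delta(1)=1\otimes1$), we conclude $(\ah)^{coH}=k1_{\ah}\cong k$.

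For the Galois map, the key point is that $\ah$ is a cleft $H$-comodule algebra, with cleaving map $\gamma\colon H\to\ah$, $\gamma(x)=v_x$: this $\gamma$ is a normalized morphism of right $H$-comodules (again by $\delta(v_x)=v_{x_1}\otimes x_2$), and, because $\alpha$ has a convolution inverse $\alpha^{-1}$, the map $\gamma$ admits a two-sided convolution inverse $\gamma^{-1}\in\Hom(H,\ah)$, built from $\alpha^{-1}$ and the antipode $S$, so that $\sum\gamma(x_1)\gamma^{-1}(x_2)=\sum\gamma^{-1}(x_1)\gamma(x_2)=\varepsilon(x)1_{\ah}$. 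Granting this, one can either invoke the standard fact that a cleft $H$-comodule algebra is $H$-Galois over its subalgebra of coinvariants (see \cite{montgomery}, Chapter~7), or write down the inverse of $\beta$ directly as
\[
\ah\otimes H\longrightarrow\ah\otimes\ah,\qquad a\otimes h\longmapsto\sum a\,\gamma^{-1}(h_1)\otimes v_{h_2},
\]
and check that its composition with $\beta$ in either order is the identity; each check reduces, after regrouping the iterated coproduct of $h$ (resp.\ of $y$, for inputs $v_x\otimes v_y$) by coassociativity, to one of the two convolution-inverse identities above together with the counit axiom.

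The coinvariant computation and the verification that the displayed map inverts $\beta$ are routine once $\gamma^{-1}$ is in hand; the one genuinely computational ingredient — and the main obstacle — is establishing the convolution invertibility of $\gamma$, i.e.\ producing $\gamma^{-1}$ explicitly and verifying the two identities. This is where the cocycle condition $\alpha(x_1,y_1)\alpha(x_2y_2,z)=\alpha(y_1,z_1)\alpha(x,y_2z_2)$ and the anti-coalgebra property of $S$ enter, and it is precisely the crossed-product/cleft-extension computation of Chapters~7--8 of \cite{montgomery} specialized to coefficients in the ground field $k$.
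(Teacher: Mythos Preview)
Your proposal is correct and follows essentially the same route as the paper: reduce the coinvariant computation to $H^{coH}=k1_H$ via the comodule isomorphism $x\mapsto v_x$, then invert $\beta$ by the map $a\otimes h\mapsto a\,\gamma^{-1}(h_1)\otimes v_{h_2}$ built from the convolution inverse of the cleaving map $\gamma(x)=v_x$. The only difference is that the paper supplies the explicit formula $\gamma^{-1}(x)=\alpha^{-1}(S(x_2),x_3)\,v_{S(x_1)}$ and checks both compositions directly, whereas you defer this verification to the cited chapters of \cite{montgomery}.
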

\begin{proof}In fact, $x\in H^{coH}$ if, and only if, $\Delta (x)=x\otimes 1_H$,
hence $$x=xS(1_H)=x_1 S(x_2)=\varepsilon(x)1_H.$$ It follows that $u_x\in (\ah)^{coH}$ if, and only if, $u_x=u_{\varepsilon(x)1_H}=\varepsilon(x)u_{1_H}$, which shows that $(\ah)^{coH}\cong k$.

Let $\varphi\colon  H\rightarrow \ah$ be the linear map given by
\[
\varphi(x) = \sigma^{-1}(S(x_2),x_3)v_{S(x_1)}.
\]
It is straightforward to check that $\varphi$ is the convolution inverse to the isomorphism $x\mapsto v_x$, that is $v_{x_1}\varphi(x_2)=\varepsilon(x)v_{1_H}$.
Using this map, define $\gamma \colon \ah\otimes H\rightarrow \ah\otimes \ah$ by
\[
\gamma(v_x\otimes y)=v_x\varphi(y_1)\otimes v_{y_2}.
\]
A direct calculation shows that $\beta$ and $\gamma$ are mutual inverses:
\begin{align*}
\beta\gamma(v_x\otimes y) &= \beta(v_x\varphi(y_1)\otimes y_2) \\
 &= v_x\varphi(y_1)v_{y_2}\otimes y_3 \\
 &= v_x\varepsilon(y_1)v_{1_H}\otimes y_2 = v_x\otimes y,
\end{align*}
and
\begin{align*}
\gamma\beta(v_x\otimes v_y) &= \gamma(v_xv_{y_1}\otimes y_2) \\
 &= v_x u_{y_1}\varphi(y_2)\otimes v_{y_3} \\
 &= v_x \varepsilon(y_1)v_{1_H}\otimes v_{y_2} = v_x\otimes v_y,
\end{align*}
for all $x,y\in H$. This completes the proof.
\end{proof}

\begin{proposition}\label{prop:H-existence}Let $H$ be a finite dimensional Hopf algebra and $A$ an $H$-Galois object. Then there is a normalized convolution invertible 2-cocycle $\alpha$ such that $A$ and $\ah$ are isomorphic as $H$-comodule algebras. 
\end{proposition}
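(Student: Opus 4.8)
The plan is to show that every $H$-Galois object $A$ is $H$-cleft and then to identify the cleft extensions of $k$ as exactly the twisted comodule algebras $\ah$. First I would observe that, $k$ being a field and $A\ne 0$ (since $A^{coH}\cong k$), $A$ is faithfully flat over $k$, so the Kreimer--Takeuchi theorem applies: $A$ is finite dimensional and the Galois condition is equivalent to the canonical algebra map $A\#H^{*}\to\mathrm{End}_{k}(A)$, $a\#f\mapsto\bigl(b\mapsto a(f\rightharpoonup b)\bigr)$, being an isomorphism, where $H^{*}$ acts on $A$ through the coaction (see Chapter~8 of \cite{montgomery}). In particular $\dim_{k}A=\dim_{k}H=:n$, $A\#H^{*}\cong M_{n}(k)$, and $A$ is its unique simple module.

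The core step is to prove the normal basis property, i.e.\ $A\cong H$ as right $H$-comodules (equivalently as left $H^{*}$-modules). Restricting the left regular module of $M_{n}(k)\cong A\#H^{*}$ to the subalgebra $1\#H^{*}$ gives $A^{\oplus n}\cong(A\#H^{*})|_{H^{*}}$ as $H^{*}$-modules. On the other hand, as a left $H^{*}$-module $A\#H^{*}=A\otimes H^{*}$ carries the action $f\cdot(a\otimes g)=\sum(f_{1}\rightharpoonup a)\otimes f_{2}g$, and by the usual argument behind the fundamental theorem of Hopf modules this module is free: $A\otimes H^{*}_{\mathrm{reg}}\cong(H^{*}_{\mathrm{reg}})^{\oplus n}$. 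Krull--Schmidt cancellation then yields $A\cong H^{*}_{\mathrm{reg}}$ as left $H^{*}$-modules; and since $H^{*}$ is a Frobenius algebra (Larson--Sweedler), $H^{*}_{\mathrm{reg}}\cong(H^{*}_{\mathrm{reg}})^{*}\cong H$ as left $H^{*}$-modules, where the module structure on $H$ is precisely the one induced by the regular comodule $\Delta$. Hence $A$ has the normal basis property, so, being also $H$-Galois, $A$ is $H$-cleft by the ``cleft $\Leftrightarrow$ Galois $+$ normal basis'' characterization of Chapter~8 of \cite{montgomery}: there is a convolution-invertible right $H$-comodule map $\gamma\colon H\to A$, and since $\gamma(1_{H})$ is a nonzero element of $A^{coH}$ we may rescale so that $\gamma(1_{H})=1_{A}$.

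Finally I would read off the cocycle. Because $A^{coH}\cong k$, the only possible weak $H$-action on $A^{coH}$ is $h\cdot\lambda=\varepsilon(h)\lambda$, so the Blattner--Cohen--Montgomery description of cleft extensions (Chapter~7 of \cite{montgomery}) gives an $H$-comodule algebra isomorphism $A\cong k\#_{\sigma}H$, where $\sigma(x,y)=\sum\gamma(x_{1})\gamma(y_{1})\gamma^{-1}(x_{2}y_{2})$ takes values in $A^{coH}=k$ and is a normalized, convolution-invertible $2$-cocycle; and with trivial action $k\#_{\sigma}H$ is by definition $\ah$ for $\alpha=\sigma$, with matching comodule structure. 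Setting $\alpha=\sigma$ completes the proof. I expect the genuine obstacle to be the normal basis step --- showing an arbitrary $H$-Galois object over a field becomes the regular comodule --- for which the isomorphism $A\#H^{*}\cong\mathrm{End}_{k}(A)$, the Frobenius property of $H^{*}$, and Krull--Schmidt cancellation are the essential ingredients; once cleftness is in hand the passage to $\ah$ is the same crossed-product bookkeeping already displayed (via the map $\varphi$) in the preceding proposition.
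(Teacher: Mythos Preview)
Your argument is correct and follows the same overall strategy as the paper: establish that $A\cong H$ as right $H$-comodules (the normal basis property), extract from this a normalized convolution-invertible comodule map $\gamma\colon H\to A$, and then read off the cocycle $\alpha(x,y)=\gamma(x_1)\gamma(y_1)\gamma^{-1}(x_2y_2)$ to obtain $A\cong\ah$.

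The difference is one of emphasis rather than substance. For the normal basis step, the paper simply cites \cite[Proposition~2]{kreimer} for $A\cong H^{*}$ as $H^{*}$-modules and then \cite{radford} to convert this into $A\cong H$ as $H$-comodules, whereas you supply an argument for this yourself (via $A\#H^{*}\cong M_n(k)$, freeness of $A\#H^{*}$ over $1\#H^{*}$, Krull--Schmidt cancellation, and the Frobenius property of $H^{*}$). Conversely, for the cocycle step the paper verifies by hand that $\alpha$ lands in $A^{coH}\cong k$, is normalized, satisfies the $2$-cocycle identity, and that $v_x\mapsto\gamma(x)$ is an $H$-comodule algebra map, while you package all of this into the ``cleft $\Leftrightarrow$ Galois $+$ normal basis'' theorem and the Blattner--Cohen--Montgomery crossed-product description from \cite{montgomery}. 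Your route is more conceptual; the paper's is more self-contained at the level of explicit cocycle bookkeeping. Both are standard and produce the same cocycle formula.
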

\begin{proof}In \cite[Proposition 2]{kreimer} it is proved that $A$ is isomorphic to $H^{*}$ as $H^{*}$-modules. In particular $\dim_{k} A$ is finite, hence from \cite[Corollary 3.1.6]{radford} $A$ is a rational $H^{*}$-module. It follows from \cite[Proposition 3.2.2(b)]{radford} that $H$ is isomorphic to $A$ as $H$-comodules. Let $\phi\colon H\rightarrow A$ be this isomorphism. From the proof in \cite[Theorem 8.2.4]{montgomery}, $\phi$ has a convolution inverse $\phi^{-1}\colon H\rightarrow A$. As a consequence,
we can always assume that $\phi(1_H)=1_A$, for otherwhise we can simply
replace $\phi$ with $\phi'=\phi(1_H)^{-1}\phi$. 

Using these maps, we define
\[
\alpha(x,y) = \phi(x_1)\phi(y_1)\phi^{-1}(x_2y_2).
\] 
To see that $\alpha$ is a cocycle, we first check that it's image lies in $k{1_A}$. Since $A^{coH}\cong k$, it is enough to show that $\delta(\alpha(x,y))=\alpha(x,y)\otimes 1_H$ ($\delta$ being the coaction as in  \cite[Corollary 3.1.6]{radford}). Following the proof of \cite[Proposition 7.2.3]{montgomery}, observing that $\delta\phi = (\phi\otimes \id_H)\Delta$ (since $\phi$ is $H$-comodule map), for all $x,y\in H$, we calculate
\begin{align*}
\delta(\alpha(x,y)) &= \delta\phi(x_1)\delta\phi(y_1)\delta\phi^{-1}(x_2y_2) \\
&= \left(\phi(x_{11})\otimes x_{12} \right)\left(\phi(y_{11})\otimes y_{12}\right)\left(\phi^{-1}(x_{22}y_{22})\otimes S(x_{21}y_{21}) \right)\\
&= \phi(x_1)\phi(y_1)\phi^{-1}(x_4y_4)\otimes x_2y_2S(y_3)S(x_3) \\
&= \phi(x_1)\phi(y_1)\phi^{-1}(x_2y_2)\otimes 1_H = \alpha(x,y)\otimes 1_H,
\end{align*}
as required. Next, we have
\[
\alpha(x,1_H)=\phi(x_1)\phi(1_H)\phi^{-1}(x_21_H)=\phi(x_1)\phi^{-1}(x_2)=	\varepsilon(x)=\alpha(1_H,x),
\]
for all $\alpha \in H$, so $\alpha$ is normalized. It remains to check the cocycle condition:
\begin{align*}
\alpha(x_1,y_1)\alpha(x_2y_2,z) &= \phi(x_1)\phi(y_1)\varepsilon(x_2y_2)\phi(z_1)\phi^{-1}(x_3y_3z_2) \\
&= \phi(x_1)\phi(y_1)\phi(z_1)\phi^{-1}(x_2y_2z_2) \\
&= \phi(y_1)\phi(z_1)\phi(x_1)\varepsilon(y_2z_2)\phi^{-1}(x_2y_3z_3) \\
&= \alpha(y_1,z_1)\alpha(x,y_2z_2),
\end{align*}
for $x,y,z\in H$.

To complete the proof, we consider the algebra $\ah$ and the map
$F:\ah \rightarrow A$ given by $F(v_x)=\phi(x)$. It is clear that $F$ is
an $H$-comodule isomorphism, so it is enough to check that $F$ is an algebra
map:
\begin{align*}
 F(v_x v_y) &= \alpha(x_1,y_1) \phi(x_2y_2) \\
  &= \phi(x_1)\phi(y_1)\phi^{-1}(x_2y_2)\phi(x_3y_3) \\
  &= \phi(x_1)\phi(y_1)\varepsilon(x_2)\varepsilon(y_2) \\
  &= F(v_x)F(v_y),
\end{align*}
and
\[
   F(v_{1_H}) = \phi(1_H) = 1_A,
\]
as required. Therefore, $A$ is isomorphic to $\ah$ as $H$-comodule algebras.
\end{proof}

\subsection{Polynomial $H$-identities}
Following \cite{kassel}, for each $i=1,2,\ldots$ let $X_i^H$ be a copy of a Hopf algebra $H$ and denote by $X_i^x \ (x \in H)$ the elements of $X_i^H$ (called $X$-symbols). Define $X_H=\bigoplus_{i\geqslant 1} X_i^H$ and take the tensor algebra
\[
T=T(X_H)=T\left( \displaystyle \bigoplus_{i\geqslant 1} X_i^H \right).
\]
$T$ is an $H$-comodule algebra with coaction given by
\[
\delta(X_i^x)=X_i^{x_1}\otimes x_2.
\]

We shall need a symmetric version $S$ of $T$ which is defined naturally as the symmetric algebra of $X_H$. However, to avoid confusion, when referring to $S$ we replace the $X$-symbols $X_i^x$ by the $t$-symbols $t_i^x$.
 
Given a linear basis $\{x_1, \ldots, x_r\}$ for $H$, it is easy to see that the tensor algebra $T$ is isomorphic to free associative unital algebra defined by the indeterminates $\bigcup_{i\geq 1}\{X_i^{x_j}\,|\,1\leq j\leq r\}$, while the symmetric algebra $S$ is isomorphic to the algebra of commutative polynomials in the indeterminates $\bigcup_{i\geq 1}\{t_i^{x_j}\,|\,1\leq j\leq r\}$. In view of this
remark it should be clear that the following definition generalizes both the ordinary (case $H=k$) and the $G$-graded (case $H=kG$) polynomial identities:
\begin{definition}\cite[Definition 2.1]{kassel}
An element $P \in T$ is a polynomial $H$-identity for the $H$-comodule algebra $A$ if $\mu(P)=0$ for all $H$-comodule algebra maps $\mu \colon T \longrightarrow A$.
\end{definition}

%It is not hard to realize that if $H=k$ then the above definition is reduced to the definition of an ordinary polynomial identity and if $H=kG$, the definition is reduced to the $G$-graded polynomial identity definition.

We denote the set of the polynomial $H$-identities for an algebra $A$ by $I_H(A)$. Then
\[
I_H(A)=\displaystyle\bigcap_{\mu\colon  T \longrightarrow A} \ker \mu.
\]
Central in PI-theory is the study of T-ideals, which can be very difficult. For $A=\ah$, it happens that $I_H(A)$ can be characterized as the kernel of a single $H$-comodule algebra map, to be described bellow.

Consider the algebra $S\otimes \ah$
generated by the simple tensors $t_i^{x}\otimes v_y$ ($x,y\in H$), hereby
denoted simply by $t_i^{x}v_y$. With the coaction
\[
\delta (t_i^xv_y)=t_i^xv_{y_1}\otimes y_2,
\]
$S\otimes \ah$ becomes an $H$-comodule algebra. Also consider the map
\[
\mu_{\alpha} \colon T \longrightarrow S \otimes \ah,
\]
defined by $\mu_{\alpha}(X_i^x)=\sum t_i^{x_1}v_{x_2}$. A direct
calculation shows that this is an $H$-comodule algebra map, and in \cite[Proposition 2.7]{kassel} it is shown that every $H$-comodule
algebra map $\mu\colon T\rightarrow \ah$ factors through $\mu_{\alpha}$,
in the sense that given $\mu$ there is a unique algebra map $\xi\colon S\rightarrow k$ such that
\[
   \mu = (\xi\otimes \id_H)\circ \mu_{\alpha}.
\]
The usefulness of $\mu_{\alpha}$ becomes clear in the following result.

\begin{theorem}\cite[Theorem 2.6]{kassel} \label{mu} An element $P \in T$ is a polynomial $H$-identity for $\ah$ if and only if $\mu_{\alpha}(P)=0$. Equivalently, $I_H(\ah)=\ker \mu_{\alpha}$.
\end{theorem}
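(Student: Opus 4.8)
The plan is to use $\mu_{\alpha}$ as a ``generic'' $H$-comodule algebra map together with the universal property recalled from \cite[Proposition 2.7]{kassel}, and then to pass from identities of $\ah$ to the vanishing of commutative polynomials by specializing the $t$-symbols to scalars. One inclusion is immediate: if $\mu_{\alpha}(P)=0$ and $\mu\colon T\to\ah$ is any $H$-comodule algebra map, the cited proposition yields an algebra map $\xi\colon S\to k$ with $\mu=(\xi\otimes\id_H)\circ\mu_{\alpha}$, whence $\mu(P)=(\xi\otimes\id_H)(\mu_{\alpha}(P))=0$. Since $\mu$ was arbitrary, $P\in I_H(\ah)$, so $\ker\mu_{\alpha}\subseteq I_H(\ah)$.

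For the reverse inclusion, suppose $P\in I_H(\ah)$. Fix a $k$-basis $(x_j)_{j\in J}$ of $H$, so that $(v_{x_j})_{j\in J}$ is a $k$-basis of $\ah$, and write $\mu_{\alpha}(P)=\sum_{j} P_j\otimes v_{x_j}$ in $S\otimes\ah$, a finite sum with $P_j\in S$. I want to show every $P_j$ is $0$. Each $P_j$ involves only finitely many of the commuting indeterminates $t_i^{x_j}$; let $k^N$ be the corresponding affine space. For any point of $k^N$, the associated evaluation is a $k$-algebra homomorphism $\xi\colon S\to k$, and since $\mu_{\alpha}$ is an $H$-comodule algebra map and $\xi\otimes\id_H\colon S\otimes\ah\to k\otimes\ah\cong\ah$ is an algebra map acting as the identity on the $H$-coordinate, the composite $\mu=(\xi\otimes\id_H)\circ\mu_{\alpha}\colon T\to\ah$ is an $H$-comodule algebra map. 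By hypothesis $0=\mu(P)=\sum_j \xi(P_j)\,v_{x_j}$ in $\ah$, and linear independence of the $v_{x_j}$ forces $\xi(P_j)=0$ for every $j$.

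Hence each $P_j$ is a commutative polynomial over $k$ vanishing at every $k$-point of its affine space. As $k$ has characteristic $0$ it is infinite, so a polynomial with this property is identically $0$; therefore $P_j=0$ for all $j$, giving $\mu_{\alpha}(P)=0$ and $I_H(\ah)\subseteq\ker\mu_{\alpha}$. The two inclusions yield $I_H(\ah)=\ker\mu_{\alpha}$.

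No genuinely hard step arises in the theorem itself: the substantive content is already packaged in the construction of $\mu_{\alpha}$ and in the factorization of \cite[Proposition 2.7]{kassel}. The two points that must be handled with some care are (i) the converse of that proposition, namely that every specialization $(\xi\otimes\id_H)\circ\mu_{\alpha}$ is indeed an $H$-comodule algebra map into $\ah$, so that it may legitimately be tested against $P\in I_H(\ah)$; and (ii) the reduction, valid because each $P_j$ depends on only finitely many variables, to the classical fact that a polynomial vanishing on all of $k^N$ over the infinite field $k$ is zero. The infinitude of $k$ is the only field-theoretic input genuinely needed for this statement.
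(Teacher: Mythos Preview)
The paper does not supply a proof of this theorem at all; it merely quotes the result from \cite[Theorem~2.6]{kassel} and uses it as a black box. Your argument is correct and is precisely the expected one: the inclusion $\ker\mu_{\alpha}\subseteq I_H(\ah)$ is immediate from the factorization $\mu=(\xi\otimes\id)\circ\mu_{\alpha}$, and the reverse inclusion follows by specializing the $t$-variables to arbitrary scalars, noting that each such specialization yields a legitimate $H$-comodule algebra map $T\to\ah$, and then invoking that a polynomial over an infinite field vanishing at all points is zero. Both points you flag as needing care---(i) that $(\xi\otimes\id)\circ\mu_{\alpha}$ is an $H$-comodule algebra map, which holds because $S$ carries the trivial coaction in $S\otimes\ah$, and (ii) the finiteness of the relevant variable set---are handled correctly.
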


\section{Non-semisimple Monomial Hopf Algebras and their Galois objects}
In this section, we recall the classification \cite{chen} of the non-semisimple monomial Hopf Algebras and the classification \cite{bichon} of their Galois objects.

Following Chen, Huang, Ye and Zhang \cite{chen} we define the group datum:

\begin{definition}
A group datum is a quadruplet $\mathbb{G}=(G,g,\chi, \mu)$, where:
\begin{enumerate}[(i)]
	\item $G$ is a finite group, with an element $g$ in its center;
	\item $\chi \colon G \longrightarrow k^{\times}$ is a one-dimensional representation with $\chi(g) \neq 1$;
	\item $\mu \in k$ is such that $\mu=0$ if $o(g)=o(\chi(g))$ and, if $\mu \neq 0$, then $\chi^{o(\chi(g))}=1$.  
\end{enumerate}
\end{definition}

For each group datum $\mathbb{G}=(G,g,\chi, \mu)$, they associate an associative unital algebra $A(\mathbb{G})$ with generators $x\in G$ and $y$ satisfying the relations
\[
yx=\chi(x)xy \text{ and }  y^d=\mu(1-g^d),
\]
for all $x \in G$, where $d=o(\chi(g))$. It follows from the Diamond's Lemma, that $\{xy^i\,|\,x\in G,\,0\leq i\leq d-1\}$ forms a basis for $A(\mathbb{G})$, therefore its dimension is $|G|d$. One can endow this algebra with a Hopf algebra structure with comultiplication $\Delta$, counit $\varepsilon$ and antipode $S$ by
\begin{align*}
\Delta(y)=1\otimes y +y \otimes g, \ \varepsilon(y)=0, \ S(y)=-yg^{-1}, \\
\Delta(x)=x\otimes x, \ \varepsilon(x)=1, \ S(x)=x^{-1}, \ \forall x \in G.
\end{align*}
Next, they show that this class of Hopf algebras is precisely that of the non-semismiple monomial Hopf algebras (the precise definition of a monomial Hopf algebra is given in \cite{chen}). Observe that $A(\mathbb{G})$ is indeed non-semisimple, since it has finite dimension and $S^2\neq 1$.

For the $A(\mathbb{G})$-Galois objects, following Bichon \cite{bichon}, we define $n=o(g)$, $d=o(\chi(g))$, and $q=\chi(g)$ a primitive $d$-th root of the unity, and further divide the various group data $\mathbb{G}=(G,g,\chi,\mu)$ into 6 different types:
\begin{itemize}
\item Type I: $\mu=0$, $d=n$ and $\chi^d=1$;
\item Type II: $\mu=0$, $d=n$ and $\chi^d\neq 1$;
\item Type III: $\mu=0$, $d<n$ and $\chi^d= 1$;
\item Type IV: $\mu=0$, $d<n$, $\chi^d\neq 1$ and no $\sigma \in Z^2(G,k^{\times})$, with $\sigma(g^d,x)=\chi^d(x)\sigma(x,g^d)$, exists for all $x \in G$;
\item Type V: $\mu=0$, $d<n$, $\chi^d\neq 1$ and there exists a $\sigma \in Z^2(G,k^{\times})$ with $\sigma(g^d,x)=\chi^d(x)\sigma(x,g^d)$, for all $x \in G$;
\item Type VI: $\mu\neq 0$ (and hence $d<n$ and $\chi^d=1$).
\end{itemize}

Here we observe that the Taft algebra $H_{n^2}$ occurs as a particular case of the type I $A(\mathbb{G})$ when $g$ is a generator of $G=\mathbb{Z}/n\mathbb{Z}$.

To each $A(\mathbb{G})$, 2-cocycle $\sigma$, and $a\in k$, Bichon constructs an associative unital algebra $A_{\sigma,a}(\mathbb{G})$ as follows:

\begin{definition}\cite[Definition 2.2]{bichon}
Let $\sigma \in Z^2(G, k^{\times})$ and $a \in k$. We define the algebra $A_{\sigma,a}(\mathbb{G)}$ to be the algebra with generators $u_y$, $\{u_x\}_{x \in G}$ and subject to the relations:
\[
u_{x}u_{x'}=\sigma(x,x')u_{xx'}, \ u_1=1, \ u_yu_x=\chi(x)u_xu_y \text{ and } u_y^d=au_{g^d},
\]
for all $x,x' \in G$.
\end{definition}
The following proposition shows that $A_{\sigma,a}(\mathbb{G})$ is always an $A(\mathbb{G})$-comodule algebra, and gives a necessary and sufficient condition for this algebra to be an $A(\mathbb{G})$-Galois object. 

\begin{proposition}\cite[Proposition 2.3]{bichon}\label{prop:condition galois} The algebra $A_{\sigma, a}(\mathbb{G})$ has a right $A(\mathbb{G})$-comodule algebra structure with coaction $\rho \colon A_{\sigma, a}(\mathbb{G}) \longrightarrow A_{\sigma, a}(\mathbb{G}) \otimes A(\mathbb{G})$ defined by $\rho(u_y)=u_1\otimes y +u_y \otimes g$ and $\rho(u_x)=u_x\otimes x$ for all $x \in G$. Moreover, $A_{\sigma, a}(\mathbb{G})$ is an $A(\mathbb{G})$-Galois object  if and only if
	\begin{equation}\label{eq iso}
		a \sigma(g^d,x)=a\chi(x)^d\sigma(x,g^d),
	\end{equation}
	for all $x\in G$. In this case, the set $\{u_xu_y^i \mid x \in G \text{ and } 0 \leqslant i \leqslant d-1 \}$ is a linear basis of $A_{\sigma, a}(\mathbb{G})$ and the map $\Psi \colon A(\mathbb{G}) \longrightarrow A_{\sigma, a}(\mathbb{G})$, $xy^i \mapsto u_xu_y^i$ is an isomorphism of $A(\mathbb{G})$-comodules.
\end{proposition}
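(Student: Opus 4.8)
The plan is to organize the argument in three stages: first, check that $\rho$ really defines a comodule algebra structure on $A_{\sigma,a}(\mathbb{G})$; second, produce the canonical spanning set $\{u_xu_y^i\}$ together with a surjective comodule map $\Psi$; and third, establish the equivalence with \eqref{eq iso} by combining a confluence (Diamond Lemma) argument with the two propositions of Subsection~\ref{galois sec}. Throughout write $H=A(\mathbb{G})$ and $A=A_{\sigma,a}(\mathbb{G})$, and recall that $q=\chi(g)$ is a primitive $d$-th root of unity. Since $\rho$ is prescribed on the generators $u_y,\{u_x\}_{x\in G}$, one has only to verify that it respects the defining relations. The relations that involve solely $G$ reduce, after a short computation in $A\otimes H$, to $yx=\chi(x)xy$ and the centrality of $g$; and $\rho(u_y)^d=\rho(au_{g^d})$ follows from the quantum binomial formula, writing $\rho(u_y)=u_1\otimes y+u_y\otimes g$ as a sum of two $q$-commuting terms (since $yg=qgy$ in $H$), all mixed terms vanishing because $q$ is a \emph{primitive} $d$-th root of unity, together with the relation $y^d=\mu(1-g^d)$ of $H$. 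Coassociativity and counitality of $\rho$ are inherited from $\Delta$.

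Next, rewriting an arbitrary word in the generators---pushing every $u_y$ to the right, merging the $u_x$'s into one via $\sigma$, and reducing $y$-powers modulo $d$ via $u_y^d=au_{g^d}$---shows that $\{u_xu_y^i\mid x\in G,\ 0\le i\le d-1\}$ spans $A$, so $\dim_k A\le|G|d=\dim_k H$. Since $\{xy^i\}$ is a $k$-basis of $H$, the assignment $xy^i\mapsto u_xu_y^i$ defines a surjective linear map $\Psi\colon H\to A$ with $\Psi(1_H)=1_A$, and a further quantum-binomial computation shows $\rho\circ\Psi=(\Psi\otimes\id_H)\circ\Delta$, so $\Psi$ is a morphism of $H$-comodules.

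For the implication ``$A$ Galois $\Rightarrow$ \eqref{eq iso}'', if $A$ is an $H$-Galois object then Proposition~\ref{prop:H-existence} gives $A\cong H$ as $H$-comodules, hence $\dim_k A=|G|d$; being a surjection between spaces of equal finite dimension, $\Psi$ is then bijective, so $\{u_xu_y^i\}$ is a basis. Computing $u_y^du_x$ in $A$ in two ways---via $u_yu_x=\chi(x)u_xu_y$ one obtains $a\chi(x)^d\sigma(x,g^d)u_{g^dx}$, while via $u_y^d=au_{g^d}$ one obtains $a\sigma(g^d,x)u_{g^dx}$ (using that $g^d$ is central)---and cancelling the basis vector $u_{g^dx}$ yields \eqref{eq iso}. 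For the converse, assume \eqref{eq iso}. First one promotes the spanning set to a basis via the Diamond Lemma applied to the presentation of $A$, with reduction rules $u_xu_{x'}\to\sigma(x,x')u_{xx'}$, $u_yu_x\to\chi(x)u_xu_y$, $u_y^d\to au_{g^d}$ and normal forms $u_xu_y^i$ ($0\le i<d$): the overlap ambiguity $u_xu_{x'}u_{x''}$ resolves by the $2$-cocycle identity for $\sigma$, the ambiguity $u_yu_xu_{x'}$ because $\chi$ is a character, the ambiguity $u_y^{d+1}$ because $\chi(g)^d=q^d=1$, and the ambiguity $u_y^du_x$ \emph{exactly} because of \eqref{eq iso}. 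Confluence then forces $\{u_xu_y^i\}$ to be linearly independent, hence a basis, so $\dim_k A=|G|d$ and $\Psi$ is an isomorphism of $H$-comodules. Moreover $\Psi$ is convolution invertible in $\Hom_k(H,A)$: its restriction to the coradical $kG$ of the pointed Hopf algebra $H$ carries each grouplike $x$ to the unit $u_x$ of $A$ (indeed $u_xu_{x^{-1}}=\sigma(x,x^{-1})1_A$ with $\sigma(x,x^{-1})\in k^{\times}$), so $\Psi|_{kG}$ is convolution invertible and hence so is $\Psi$, convolution inverses lifting along the coradical filtration; also $A^{coH}=\Psi(H^{coH})=\Psi(k1_H)=k1_A$ since $\Psi$ is a comodule isomorphism. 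Feeding $\phi=\Psi$ into the construction from the proof of Proposition~\ref{prop:H-existence} (which uses only that $\phi$ is a convolution invertible comodule isomorphism) then produces a normalized convolution invertible $\alpha\in Z^2(H,k^{\times})$ and an isomorphism of $H$-comodule algebras $A\cong\ah$; since $\ah$ is $H$-Galois by the first proposition of Subsection~\ref{galois sec}, so is $A$. In this case $\Psi$ is the asserted comodule isomorphism and $\{u_xu_y^i\}$ the asserted linear basis.

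The step I expect to be the real obstacle is promoting $\{u_xu_y^i\}$ from a spanning set to a genuine basis under the hypothesis \eqref{eq iso}: a priori the defining relations could force a dimension drop, and ruling this out requires the full confluence check (or, equivalently, an explicit realization of $A$ as the algebra on the vector space with that basis, followed by a verification of associativity). Since \eqref{eq iso} enters at exactly one place---the ambiguity $u_y^du_x$---that single computation is really the heart of the equivalence; everything else is a routine quantum-binomial manipulation or a direct appeal to the two propositions of Subsection~\ref{galois sec}.
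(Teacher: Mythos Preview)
The paper does not supply its own proof of this proposition: it is quoted verbatim from \cite[Proposition~2.3]{bichon} and used as a black box, so there is no in-paper argument to compare against. Your write-up is therefore being judged on its own merits, and the overall architecture---verify $\rho$ on relations, use the Diamond Lemma to pin down $\dim_k A$, and deduce Galois-ness by identifying $A$ with some $\ah$ via the machinery of Subsection~\ref{galois sec}---is sound and pleasantly self-contained relative to the present paper. In particular, your observation that the overlap $u_y^{\,d}u_x$ is \emph{the} ambiguity governed by \eqref{eq iso} is exactly the point, and your route ``$\Psi$ is a convolution-invertible comodule isomorphism $\Rightarrow$ run the construction of Proposition~\ref{prop:H-existence} backwards $\Rightarrow$ $A\cong\ah$'' is a legitimate (if slightly indirect) substitute for checking bijectivity of the canonical map $\beta$ by hand, which is closer to what Bichon does.

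One genuine gap to flag: your verification that $\rho$ respects the relation $u_y^{\,d}=a\,u_{g^d}$ does not close when $\mu\neq 0$ (type~VI). With $z=u_1\otimes y$ and $w=u_y\otimes g$ one has $zw=qwz$, so the $q$-binomial gives
\[
\rho(u_y)^d \;=\; z^d+w^d \;=\; 1\otimes y^d \;+\; u_y^{\,d}\otimes g^d
\;=\; \mu\,(1\otimes 1-1\otimes g^d)\;+\;a\,u_{g^d}\otimes g^d,
\]
whereas $\rho(a\,u_{g^d})=a\,u_{g^d}\otimes g^d$; the residual term $\mu(1\otimes 1-1\otimes g^d)$ does not vanish when $\mu\neq 0$ and $g^d\neq 1$. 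This is not a flaw in your strategy but rather a signal that the relation for $u_y^{\,d}$, as transcribed here, is tailored to the $\mu=0$ types; in Bichon's original the last relation carries an extra constant term involving $\mu$, and with that correction your computation goes through verbatim. You should make this adjustment explicit rather than asserting that ``the relation $y^d=\mu(1-g^d)$ of $H$'' finishes the check. A second, minor point: the step ``convolution inverses lift along the coradical filtration'' is standard (Takeuchi's lemma for pointed coalgebras) but is not proved in this paper, so a one-line citation would strengthen the argument.
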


Next, Bichon shows that all $A(\mathbb{G})$-Galois objects are of the form $A_{\sigma, a}(\mathbb{G})$ and gives a necessary and sufficient condition for two such objects to be isomorphic as $A(\mathbb{G})$-comodule algebras.

\begin{proposition}\cite[Proposition 2.9]{bichon}\label{galois iso} Let $B$ be an $A(\mathbb{G})$-Galois object. Then there exists $\sigma \in Z^2(G,k^{\times})$ and $a \in k$ such that $B\cong A_{\sigma, a}(\mathbb{G})$ as $A(\mathbb{G})$-comodule algebras.
\end{proposition}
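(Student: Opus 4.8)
The plan is to combine Proposition~\ref{prop:H-existence} with an explicit normalization and a dimension count. Set $H=A(\mathbb G)$; since $\dim_kH=|G|d<\infty$, Proposition~\ref{prop:H-existence} provides a normalized convolution invertible $\alpha\in Z^2(H,k^\times)$ with $B\cong\ah$ as $H$-comodule algebras, where I write $v_h$ ($h\in H$) for the canonical generators of $\ah$, so that $v_pv_q=\alpha(p_1,q_1)v_{p_2q_2}$ and $\delta(v_h)=v_{h_1}\otimes h_2$. Thus it suffices to produce $\sigma\in Z^2(G,k^\times)$ and $a\in k$ with $\ah\cong A_{\sigma,a}(\mathbb G)$ as $H$-comodule algebras. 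Since $\Delta(x)=x\otimes x$ for $x\in G$, one has $v_xv_{x'}=\alpha(x,x')v_{xx'}$, and reading the normalization and cocycle identities of $\alpha$ on grouplike arguments shows that $\sigma:=\alpha|_{G\times G}$ is a normalized $2$-cocycle on $G$; this will be $\sigma$. I would then try to realize the assignment $u_x\mapsto v_x$ ($x\in G$), $u_y\mapsto w$ as an $H$-comodule algebra map $\Phi\colon A_{\sigma,a}(\mathbb G)\to\ah$. Compatibility with the coactions (and $\rho(u_y)=u_1\otimes y+u_y\otimes g$ from Proposition~\ref{prop:condition galois}) forces $\delta(w)=v_1\otimes y+w\otimes g$, and applying $\varepsilon\otimes\mathrm{id}$ shows this means exactly $w=v_y+c\,v_g$ for some $c\in k$ (the $g$-isotypic part of $\ah$ being the line $k\,v_g$). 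The relations $u_xu_{x'}=\sigma(x,x')u_{xx'}$ and $u_1=1$ then hold automatically, while using $\Delta(y)=1\otimes y+y\otimes g$ one computes, for $x\in G$,
\[
w\,v_x-\chi(x)\,v_x\,w=\bigl(f(x)-c\,h(x)\bigr)v_{gx},
\]
where $f(x):=\alpha(y,x)-\chi(x)\alpha(x,y)$ and $h(x):=\chi(x)\alpha(x,g)-\alpha(g,x)$; so the relation $u_yu_x=\chi(x)u_xu_y$ holds for $\Phi$ precisely when $f=c\,h$ on $G$.

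The step I expect to be the main obstacle is to show that a suitable $c$ exists, i.e.\ that the function $f$ on $G$ is a scalar multiple of $h$. This is where the cocycle identity for $\alpha$ really enters: specializing $\alpha(p_1,q_1)\alpha(p_2q_2,r)=\alpha(q_1,r_1)\alpha(p,q_2r_2)$ to triples drawn from $G\cup\{y\}$ and using $yx=\chi(x)xy$ together with $\Delta(xy)=x\otimes xy+xy\otimes xg$, one should be able to extract enough linear identities among the values $\alpha(y,x),\alpha(x,y),\alpha(g,x),\alpha(x,g)$ to conclude $f(x)h(x')=f(x')h(x)$ for all $x,x'\in G$, hence $f=c\,h$ for some $c\in k$ (equivalently: $\alpha$ is cohomologous to a cocycle $\alpha'$ with $\alpha'(y,x)=\chi(x)\alpha'(x,y)$). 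Fixing such a $c$ and setting $w:=v_y+c\,v_g$, the third relation is satisfied.

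For the last relation, note that since $q=\chi(g)$ is a primitive $d$-th root of unity and $1\otimes y$, $y\otimes g$ satisfy $(1\otimes y)(y\otimes g)=q\,(y\otimes g)(1\otimes y)$, the cross terms in the $q$-binomial expansion vanish, so $\Delta(y^d)=(1\otimes y+y\otimes g)^d=1\otimes y^d+y^d\otimes g^d$ in $H$. As $\delta$ is an algebra map and $\delta(w)=v_1\otimes y+w\otimes g$ has the same shape, $\delta(w^d)=v_1\otimes y^d+w^d\otimes g^d$. Using $y^d=\mu(1-g^d)$ one checks that $\delta(w^d-\mu v_1)=(w^d-\mu v_1)\otimes g^d$, and since the $g^d$-isotypic part of $\ah$ is the line $k\,v_{g^d}$, this forces $w^d=\mu v_1+a\,v_{g^d}$ for a unique $a\in k$ — the remaining defining relation of $A_{\sigma,a}(\mathbb G)$ (it reads $w^d=a\,v_{g^d}$ in types I--V, where $\mu=0$).

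With $\sigma,a$ so chosen, $\Phi$ is a well-defined $H$-comodule algebra homomorphism. It is surjective because $\ah$ is generated as an algebra by $\{v_x:x\in G\}\cup\{v_y\}$ (recover $v_{xy}$ from $v_xv_y$ and $v_xv_g$, then $v_{xy^i}$ by induction on $i$). Finally, for $0\le i\le d-1$ a short computation gives $\Phi(u_xu_y^i)=v_xw^i=v_{xy^i}+(\text{terms of lower }y\text{-degree})$, so the family $\{\Phi(u_xu_y^i):x\in G,\,0\le i\le d-1\}$ is obtained from the basis $\{v_{xy^i}:x\in G,\,0\le i\le d-1\}$ of $\ah$ by a unitriangular (hence invertible) change of coordinates and is therefore itself a basis of $\ah$; since $\{u_xu_y^i:x\in G,\,0\le i\le d-1\}$ spans $A_{\sigma,a}(\mathbb G)$ and $\Phi$ maps it onto this basis, $\Phi$ carries a basis to a basis and so is an isomorphism. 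Hence $B\cong\ah\cong A_{\sigma,a}(\mathbb G)$. (As a by-product $A_{\sigma,a}(\mathbb G)$ is then an $H$-Galois object, so by Proposition~\ref{prop:condition galois} the pair $(\sigma,a)$ automatically satisfies \eqref{eq iso}.)
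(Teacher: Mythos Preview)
The paper does not supply its own proof of this proposition; it is quoted verbatim from Bichon, so there is nothing in the paper to compare your argument against and your outline has to stand on its own merits.

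Your overall strategy---realize $B$ as $\ah$ via Proposition~\ref{prop:H-existence}, set $\sigma=\alpha|_{G\times G}$, find a skew-primitive element $w=v_y+cv_g$, read off $a$ from $w^d$, and check bijectivity by a unitriangular argument---is sound, and the $q$-binomial computation of $\delta(w^d)$ and the final dimension count are carried out correctly.

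The substantive gap is exactly where you flag it. You assert that the cocycle identity for $\alpha$ ``should'' yield $f(x)h(x')=f(x')h(x)$ for all $x,x'\in G$, hence $f=ch$ for some $c$, but you do not verify this. Note that $h(g)=(q-1)\alpha(g,g)\neq 0$, so $h$ is not identically zero and the only possible value is $c=f(g)/h(g)$; what is actually needed is the identity $f(x)h(g)=f(g)h(x)$ for every $x\in G$, and that requires an explicit computation with the cocycle condition on triples mixing grouplike elements and $y$ (e.g.\ $(y,x,g)$, $(g,x,y)$, $(x,y,g)$, $(x,g,y)$). Until that computation is done your map $\Phi$ is not known to be well defined, and this is precisely the non-formal content of the proposition.

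A smaller point on type~VI: you correctly compute $w^d=\mu\,v_1+av_{g^d}$ and call this ``the remaining defining relation of $A_{\sigma,a}(\mathbb G)$'', but as transcribed in this paper the relation reads $u_y^d=au_{g^d}$ with no $\mu$ term. With that relation your $\Phi$ would fail when $\mu\neq 0$; in fact one checks directly that $\rho(u_y)^d\neq\rho(au_{g^d})$ in that case, so the coaction of Proposition~\ref{prop:condition galois} would not even be multiplicative. The discrepancy is a transcription issue (Bichon's original relation must carry the extra constant $\mu$), and your computation is consistent with the corrected form; you should, however, state explicitly which relation you are using rather than silently depart from the paper's definition.
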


\begin{proposition}\cite[Proposition 2.10]{bichon}\label{iso condition} Let $\sigma, \tau \in Z^2(G,k^{\times})$ and $a,b \in k$ such that $A_{\sigma, a}(\mathbb{G})$ and $A_{\tau, b}(\mathbb{G})$ are $A(\mathbb{G})$-Galois objects. Then the $A(\mathbb{G})$-comodule algebras $A_{\sigma, a}(\mathbb{G})$ and $A_{\tau, b}(\mathbb{G})$ are isomorphic if and only if exists $\nu \colon G \longrightarrow k^{\times}$ with $\nu(1)=1$ such that
	\[
	\sigma=\partial(\nu)\tau \text{ and } b=a \nu(g^d).
	\]
\end{proposition}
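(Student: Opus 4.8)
The plan is to prove the two implications separately, using Proposition~\ref{prop:condition galois} throughout so that in each of $A_{\sigma,a}(\mathbb{G})$ and $A_{\tau,b}(\mathbb{G})$ we may work with the linear basis $\{u_xu_y^i\}$ and the $A(\mathbb{G})$-comodule isomorphism $\Psi\colon A(\mathbb{G})\to A_{\sigma,a}(\mathbb{G})$, $xy^i\mapsto u_xu_y^i$. For sufficiency, given $\nu\colon G\to k^{\times}$ with $\nu(1)=1$, $\sigma=\partial(\nu)\tau$ and $b=a\nu(g^d)$, I would define $\Phi\colon A_{\tau,b}(\mathbb{G})\to A_{\sigma,a}(\mathbb{G})$ on generators by $\Phi(u_x)=\nu(x)^{-1}u_x$ and $\Phi(u_y)=u_y$, and check that these images satisfy the defining relations of $A_{\tau,b}(\mathbb{G})$: the relations $u_1=1$ and $u_yu_x=\chi(x)u_xu_y$ are immediate, since the $u_x$ are merely rescaled and $u_y$ is untouched; the relation $u_xu_{x'}=\tau(x,x')u_{xx'}$ becomes exactly $\sigma=\partial(\nu)\tau$ (with the convention $\partial(\nu)(x,x')=\nu(x)\nu(x')\nu(xx')^{-1}$); and $u_y^d=bu_{g^d}$ becomes $au_{g^d}=b\nu(g^d)^{-1}u_{g^d}$, that is, $b=a\nu(g^d)$. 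Then $\Phi$ is a well-defined algebra map; it is a comodule map because the identity $\rho\Phi=(\Phi\otimes\id)\rho$ holds on the generators $u_x$ (both sides give $\nu(x)^{-1}u_x\otimes x$) and $u_y$, both sides being algebra maps; and it is bijective because it carries the basis $\{u_xu_y^i\}$ of $A_{\tau,b}(\mathbb{G})$ to the nonzero rescalings $\{\nu(x)^{-1}u_xu_y^i\}$ of the basis of $A_{\sigma,a}(\mathbb{G})$.

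For necessity, let $\Phi\colon A_{\tau,b}(\mathbb{G})\to A_{\sigma,a}(\mathbb{G})$ be an isomorphism of $A(\mathbb{G})$-comodule algebras. The key input is that for each $x\in G$ the coaction eigenspace $\{v\in A_{\sigma,a}(\mathbb{G}):\rho(v)=v\otimes x\}$ is the line $ku_x$: transporting along $\Psi$, this reduces to $\{h\in A(\mathbb{G}):\Delta(h)=h\otimes x\}=kx$, which follows on applying $\varepsilon\otimes\id$. Since $\rho\Phi=(\Phi\otimes\id)\rho$ and $\rho(u_x)=u_x\otimes x$, this forces $\Phi(u_x)=\nu(x)^{-1}u_x$ for some $\nu(x)\in k^{\times}$, with $\nu(1)=1$ because $\Phi(u_1)=1$. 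Likewise, from $\rho(u_y)=u_1\otimes y+u_y\otimes g$ and $\Phi(u_1)=u_1$ we get $\rho(\Phi(u_y))=u_1\otimes y+\Phi(u_y)\otimes g$; transporting along $\Psi$ and using $\Delta(y)=1\otimes y+y\otimes g$ identifies the set of such elements with $\{u_y+cu_g:c\in k\}$, so $\Phi(u_y)=u_y+cu_g$ for some $c\in k$.

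It then remains to force $c=0$ and to read off the two equalities. Applying $\Phi$ to $u_yu_x=\chi(x)u_xu_y$, expanding both sides in $A_{\sigma,a}(\mathbb{G})$, and comparing the coefficient of the basis element $u_{gx}=u_{xg}$ (using that $g$ is central in $G$) gives $c\bigl(\sigma(g,x)-\chi(x)\sigma(x,g)\bigr)=0$ for all $x\in G$; at $x=g$ this reads $c(1-\chi(g))\sigma(g,g)=0$, whence $c=0$ since $\chi(g)\neq 1$ and $\sigma(g,g)\in k^{\times}$. With $\Phi(u_y)=u_y$ now established, applying $\Phi$ to $u_xu_{x'}=\tau(x,x')u_{xx'}$ yields $\sigma=\partial(\nu)\tau$ exactly as in the sufficiency part, and applying it to $u_y^d=bu_{g^d}$ yields $au_{g^d}=b\nu(g^d)^{-1}u_{g^d}$, i.e.\ $b=a\nu(g^d)$. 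I expect the only genuinely delicate step to be the vanishing of $c$: it is the sole place where the group-datum axiom $\chi(g)\neq 1$ and the fact that $\sigma$ is $k^{\times}$-valued (so $\sigma(g,g)\neq 0$) enter, everything else being routine bookkeeping with the defining relations and the coaction.
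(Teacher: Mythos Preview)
The paper does not actually prove this proposition: it is quoted verbatim as \cite[Proposition 2.10]{bichon} with no argument supplied, so there is no ``paper's own proof'' to compare against. That said, your argument is correct and self-contained. The sufficiency direction is a routine check that the rescaling $\Phi(u_x)=\nu(x)^{-1}u_x$, $\Phi(u_y)=u_y$ respects all four defining relations and the coaction, and your bookkeeping there is accurate. For necessity, the identification of the $x$-eigenspace of $\rho$ with $ku_x$ via $\Psi$ and $\varepsilon\otimes\id$ is clean, and the affine-line description $\{u_y+cu_g\}$ of the possible images of $u_y$ follows by subtracting two solutions and reducing to the eigenspace case. The elimination of the constant $c$ by evaluating the relation $u_yu_x=\chi(x)u_xu_y$ at $x=g$ and invoking $\chi(g)\neq 1$ together with $\sigma(g,g)\in k^{\times}$ is exactly the right move, and it is indeed the only place where the group-datum hypothesis $\chi(g)\neq 1$ is used. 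Everything else is straightforward relation-chasing, and the conclusions $\sigma=\partial(\nu)\tau$ and $b=a\nu(g^d)$ drop out correctly.
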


It turns out that here we shall not need all of the details of Bichon's classification, but only a few facts that we have tailored to our needs in the following propositions.

\begin{proposition}\label{prop:iso2} Let $\mathbb{G}$ be a group datum of types II or IV. Then any $A(\mathbb{G})$-Galois object is isomorphic to $A_{\sigma, 0}(\mathbb{G})$ for some $\sigma \in Z^2(G,k^{\times})$.
\end{proposition}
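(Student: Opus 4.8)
The plan is to read off the conclusion directly from Bichon's classification. By Proposition~\ref{galois iso}, any $A(\mathbb{G})$-Galois object $B$ is isomorphic, as an $A(\mathbb{G})$-comodule algebra, to $A_{\sigma,a}(\mathbb{G})$ for some $\sigma\in Z^2(G,k^{\times})$ and some $a\in k$. So it suffices to prove that in types II and IV one is forced to have $a=0$; since then $A_{\sigma,a}(\mathbb{G})=A_{\sigma,0}(\mathbb{G})$ for that same $\sigma$, we get $B\cong A_{\sigma,0}(\mathbb{G})$ as desired. The one tool needed is the Galois criterion of Proposition~\ref{prop:condition galois}: because $A_{\sigma,a}(\mathbb{G})$ is a Galois object, equation~\eqref{eq iso} holds, i.e. $a\,\sigma(g^d,x)=a\,\chi(x)^d\,\sigma(x,g^d)$ for all $x\in G$.

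Next I would treat the two types separately. In type II one has $d=n=o(g)$, hence $g^d=g^n=1_G$. Taking $\sigma$ normalized (which is in fact automatic: the cocycle identity forces $\sigma(1,x)=\sigma(x,1)=\sigma(1,1)$, and the relation $u_1u_1=\sigma(1,1)u_1$ together with $u_1=1$ gives $\sigma(1,1)=1$), equation~\eqref{eq iso} becomes $a=a\,\chi(x)^d$ for every $x\in G$. Since in type II we have $\chi^d\neq 1$, there is $x_0\in G$ with $\chi(x_0)^d\neq 1$, so $a\bigl(1-\chi(x_0)^d\bigr)=0$ forces $a=0$.

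For type IV, suppose for contradiction that $a\neq 0$. Then we may cancel $a$ in equation~\eqref{eq iso} to obtain $\sigma(g^d,x)=\chi(x)^d\,\sigma(x,g^d)=\chi^d(x)\,\sigma(x,g^d)$ for all $x\in G$. But this is precisely a $\sigma\in Z^2(G,k^{\times})$ of the kind whose nonexistence defines type IV, a contradiction. Hence $a=0$, and again $B\cong A_{\sigma,0}(\mathbb{G})$.

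I do not expect a genuine obstacle here: the statement is a short extraction from Bichon's results, and the only points demanding a little care are (a) justifying that the defining cocycle of $A_{\sigma,a}(\mathbb{G})$ may be taken normalized, so that in type II equation~\eqref{eq iso} really does reduce to $a=a\chi(x)^d$, and (b) matching, in type IV, the equation obtained after cancelling $a$ with the defining clause of that type verbatim (including the identification $\chi(x)^d=\chi^d(x)$).
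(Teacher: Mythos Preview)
Your proof is correct and follows essentially the same route as the paper's own argument: invoke Proposition~\ref{galois iso} to write $B\cong A_{\sigma,a}(\mathbb{G})$, then use the Galois criterion~\eqref{eq iso} together with the defining conditions of types II and IV to force $a=0$. The paper merely states that ``condition~(\ref{eq iso}) is satisfied only if $a=0$'' without spelling out the normalization step or the use of $g^d=1$ in type~II, so your version is in fact a more detailed rendering of the same proof.
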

\begin{proof}
	Let $B$ an $A(\mathbb{G})$-Galois object. By Proposition \ref{galois iso}, $B\cong A_{\sigma,a}(\mathbb{G})$ for some $\sigma \in Z^2(G,k^{\times})$ and $a \in k$. 
	
	In type II, $d=n$ and $\chi^d \neq 1$, while in type IV there is no $\sigma \in Z^2(G,k^{\times})$ satisfying $\sigma(g^d,x)=\chi^d(x)\sigma(x,g^d)$, for all $x \in G$.
In both cases, condition (\ref{eq iso}) is satisfied only if $a=0$, hence $B\cong A_{\sigma,0}$.
\end{proof}

\begin{proposition} \label{prop:iso3}Let $\mathbb{G}$ be a group datum of type III, V or VI. Then	 any $A(\mathbb{G})$-Galois object is isomorphic to either $A_{\sigma, 0}(\mathbb{G})$ or $A_{\sigma, 1}(\mathbb{G})$ for some $\sigma \in Z^2(G,k^{\times})$.
\end{proposition}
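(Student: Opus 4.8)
The plan is to mimic the proof of Proposition \ref{prop:iso2}: start with an $A(\mathbb{G})$-Galois object $B$, use Proposition \ref{galois iso} to write $B\cong A_{\sigma,a}(\mathbb{G})$ for some $\sigma\in Z^2(G,k^\times)$ and $a\in k$, and then handle the case $a\neq 0$ by rescaling the parameter to $1$. If $a=0$ there is nothing to prove, so assume $a\neq 0$. The goal is to produce a cocycle $\tau$ cohomologous to $\sigma$ so that $A_{\tau,1}(\mathbb{G})$ is again a Galois object and Proposition \ref{iso condition} applies to give $A_{\sigma,a}(\mathbb{G})\cong A_{\tau,1}(\mathbb{G})$.

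The crucial point is that in each of types III, V, VI one has $d<n$, and since $d\mid n$ this forces $g^d\neq 1$ in $G$. Hence we may define a normalized $1$-cochain $\nu\colon G\to k^\times$ by $\nu(g^d)=a^{-1}$ and $\nu(x)=1$ for all $x\in G$ with $x\neq g^d$; this is well defined precisely because $g^d\neq 1$, and $\nu(1)=1$. Set $\tau=\partial(\nu)^{-1}\sigma\in Z^2(G,k^\times)$ and $b=a\,\nu(g^d)=1$.

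Next I would check that $A_{\tau,1}(\mathbb{G})$ is an $A(\mathbb{G})$-Galois object, i.e. that condition (\ref{eq iso}) holds with $a$ replaced by $1$, namely $\tau(g^d,x)=\chi(x)^d\tau(x,g^d)$ for all $x\in G$. Since $g$ is central, $g^d x=x g^d$, so in the expansions of $\tau(g^d,x)$ and $\tau(x,g^d)$ via $\partial(\nu)$ the cochain factors are identical and cancel, reducing the desired identity to $\sigma(g^d,x)=\chi(x)^d\sigma(x,g^d)$. This last equality is exactly condition (\ref{eq iso}) for $A_{\sigma,a}(\mathbb{G})$ divided by $a$, which is valid because $B\cong A_{\sigma,a}(\mathbb{G})$ is a Galois object and $a\neq 0$. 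Finally, $\nu$ witnesses the hypothesis of Proposition \ref{iso condition} ($\sigma=\partial(\nu)\tau$ and $1=a\,\nu(g^d)$), so $A_{\sigma,a}(\mathbb{G})\cong A_{\tau,1}(\mathbb{G})$ and therefore $B\cong A_{\tau,1}(\mathbb{G})$.

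I do not expect a genuine obstacle here. The only steps needing a little care are the verification that $g^d\neq 1$ in types III, V, VI (this is where the common hypothesis $d<n$ is used, so that $\nu$ may be prescribed freely at $g^d$) and the routine observation that condition (\ref{eq iso}) is unchanged when $\sigma$ is twisted by a coboundary, owing to the centrality of $g$.
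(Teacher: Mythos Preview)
Your proof is correct and follows essentially the same approach as the paper's: reduce to $A_{\sigma,a}(\mathbb{G})$ via Proposition \ref{galois iso}, use $d<n$ to ensure $g^d\neq 1$, choose a normalized $1$-cochain $\nu$ with a prescribed value at $g^d$, twist the cocycle by $\partial(\nu)$, verify that condition (\ref{eq iso}) is preserved under this coboundary twist because $g$ is central, and apply Proposition \ref{iso condition}. The only cosmetic difference is that the paper starts with $B\cong A_{\tau,a}(\mathbb{G})$, sets $\nu(g^d)=a$, and defines $\sigma=\partial(\nu)\tau$, whereas you set $\nu(g^d)=a^{-1}$ and $\tau=\partial(\nu)^{-1}\sigma$; these are the same computation with the roles of $\sigma$ and $\tau$ (and of $\nu$ and $\nu^{-1}$) interchanged.
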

\begin{proof}
	Let $B$ be an arbitrary $A(\mathbb{G})$-Galois object. By Proposition \ref{galois iso}, $B\cong A_{\tau,a}(\mathbb{G})$ for some $\tau \in Z^2(G,k^{\times})$, $a \in k$ and we may assume that $a \neq 0$ ($A_{\tau,0}$ is always a Galois object).
	 
	Let $\nu \colon G \longrightarrow k^{\times}$ be such that $\nu(1)=1$ and $\nu(g^d)=a$ (notice that $g^d\neq 1$). For $\sigma=\partial(\nu)\tau\in Z^2(G,k^\times)$, 
	\[
	\sigma(g^d,x)=\partial(\nu)\tau(g^d,x)=\partial(\nu)\chi(x)^d\tau(x,g^d)=\chi(x)^d\sigma(x,g^d),
	\]
	for all $x \in G$, hence from Proposition \ref{prop:condition galois}, $A_{\sigma, 1}(\mathbb{G})$ is an $A(\mathbb{G})$-Galois object. It follows from Proposition \ref{iso condition} that $A_{\tau, a}(\mathbb{G})\cong A_{\sigma,1}(\mathbb{G})$.
\end{proof}

\section{The Main Theorem}

In this section, $H$ always denotes the non-semisimple monomial Hopf algebra $A(\mathbb{G})$, for some group datum $\mathbb{G}$, and our main objective is to prove the following result.

\begin{theorem}[Main]\label{thm:main}
Suppose that $A$ and $B$ are two $H$-Galois objects. If 
\[
I_H(A)=I_H(B),
\]
then $A$ and $B$ are isomorphic as $H$-comodule algebras.
\end{theorem}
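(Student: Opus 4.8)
The plan is to pass to Bichon's explicit models $A_{\sigma,a}(\mathbb{G})$ and to recover, from the set of polynomial $H$-identities, the pair $\bigl([\sigma]\in H^{2}(G,k^{\times}),\,a\bigr)$ up to the equivalence of Proposition~\ref{iso condition}, using Kassel's universal map $\mu_{\alpha}$ from Theorem~\ref{mu}. First I would reduce. Since $A$ and $B$ are $H$-Galois objects over the \emph{same} $H=A(\mathbb{G})$, they have the same Bichon type; for type I the statement is \cite{kassel}, so assume $\mathbb{G}$ is of type II--VI. By Proposition~\ref{galois iso} together with Propositions~\ref{prop:iso2} and \ref{prop:iso3} I may write $A\cong A_{\sigma,a}(\mathbb{G})$ and $B\cong A_{\tau,b}(\mathbb{G})$ with $a,b\in\{0,1\}$, and with $a=b=0$ in types II and IV. Identifying $A_{\sigma,a}(\mathbb{G})\cong\ah$ for the associated cocycle $\alpha$, the goal becomes: from $I_H(A)=I_H(B)$ produce a function $\nu\colon G\to k^{\times}$ with $\nu(1)=1$, $\sigma=\partial(\nu)\tau$ and $b=a\,\nu(g^{d})$, and then apply Proposition~\ref{iso condition}.

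The group-like variables recover $[\sigma]$. For $x\in G$ the element $x\in H$ is group-like, so $\mu_{\alpha}(X_i^{x})=t_i^{x}v_{x}$, and the restriction of $\mu_{\alpha}$ to the subalgebra $T'\subseteq T$ generated by the $X_i^{x}$ $(x\in G)$ is precisely the analogue of $\mu_{\alpha}$ for the $kG$-comodule algebra $k_{\sigma}G$ (the twisted group algebra, spanned by $\{v_{x}:x\in G\}$). Hence $I_H(\ah)\cap T'$ equals the ideal of ordinary $G$-graded polynomial identities of $k_{\sigma}G$. Since $k_{\sigma}G$ is $G$-graded simple, the theorem of Aljadeff--Haile \cite{aljadeffhaile} shows these identities determine it up to $G$-graded isomorphism, that is, determine $[\sigma]\in H^{2}(G,k^{\times})$. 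Therefore $I_H(A)=I_H(B)$ forces $[\sigma]=[\tau]$; fix $\nu_{0}$ with $\nu_{0}(1)=1$ and $\sigma=\partial(\nu_{0})\tau$. This already finishes the cases with $a=b=0$ (types II and IV, and the trivial-$a$ objects of types III, V, VI): there the condition $b=a\,\nu(g^{d})$ is vacuous, so $\nu=\nu_{0}$ works.

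It remains to treat $a=b=1$ (types III, V, VI), where I would exploit the skew-primitive variable $X_i^{y}$. From $\Delta(y)=1\otimes y+y\otimes g$ one gets $\mu_{\alpha}(X_i^{y})=t_i^{1}v_{y}+t_i^{y}v_{g}$, and since $v_{y}v_{g}=\chi(g)v_{g}v_{y}$ with $q=\chi(g)$ a primitive $d$-th root of unity, the vanishing of the Gaussian binomials $\binom{d}{m}_{q}$ for $0<m<d$ gives $\mu_{\alpha}\bigl((X_i^{y})^{d}\bigr)=\bigl(a(t_i^{1})^{d}+\kappa_{\sigma}(t_i^{y})^{d}\bigr)v_{g^{d}}$, where $\kappa_{\sigma}=\prod_{m=1}^{d-1}\sigma(g^{m},g)$; iterating, $\mu_{\alpha}\bigl((X_i^{y})^{o(g)}\bigr)=c_{\sigma}\bigl(a(t_i^{1})^{d}+\kappa_{\sigma}(t_i^{y})^{d}\bigr)^{N}v_{1}$ with $N=o(g^{d})$ and $c_{\sigma}=\prod_{i=1}^{N-1}\sigma(g^{id},g^{d})$. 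Combining such polynomials with the group-like relations (already matched in the previous step) and with the further variables $X_i^{xy^{l}}$ — whose images under $\mu_{\alpha}$ carry their own free parameters $t_i^{xy^{l}}$ and let one realise the ``partial $d$-th roots'' $v_{g^{-i}}v_{y}^{i}$ — one reads off from $I_H$ enough of the values of a cocycle representing $[\sigma]$ on the cyclic subgroup $\langle g\rangle$ to pin down $a$, and hence to correct $\nu_{0}$ by a character of $G$ so that $\nu_{0}(g^{d})=1$. This produces the $\nu$ demanded by Proposition~\ref{iso condition}, whence $A\cong B$.

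The main obstacle is this last point: the polynomial $H$-identities of $A_{\sigma,1}(\mathbb{G})$ see $\sigma$ only through $H$-comodule-algebra-invariant combinations, and one must check these carry \emph{exactly} the information used in Proposition~\ref{iso condition} — no more, no less. The delicate case is when $\langle g^{d}\rangle\cap[G,G]\neq 1$, so that $g^{d}$ has strictly smaller order in $G^{\mathrm{ab}}$ than in $G$; there the single scalar $c_{\sigma}$ under-determines $\nu_{0}(g^{d})$, and one genuinely needs the refined identities built from the $X_i^{xy^{l}}$ to recover the finer invariant. Carrying out this bookkeeping uniformly across types III, V and VI, and verifying that the reconstructed data matches the equivalence relation on $(\sigma,a)$, is where the bulk of the work will lie.
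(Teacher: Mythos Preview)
Your reduction to Bichon's models and the recovery of $[\sigma]$ via the group-like sub-tensor-algebra and Aljadeff--Haile is exactly what the paper does (this is the content of its Theorem~\ref{lema-kassel}), so that part is fine.

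The genuine gap is the case split that follows. After settling ``$a=b=0$'' you write ``It remains to treat $a=b=1$'', but you have nowhere argued that $a=b$; the possibility $a=0$, $b=1$ (or conversely) is simply skipped. Your later computations with $(X_i^{y})^{d}$ and $(X_i^{y})^{o(g)}$ are aimed at the subtler problem of fixing $\nu(g^{d})$ once $a=b=1$, not at separating $a=0$ from $a=1$, and you yourself flag that program as incomplete. So as it stands the proposal does not prove the theorem for types III, V, VI.

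The paper closes this gap with a single, much simpler stroke: it exhibits the explicit element
\[
\mathcal{Q}=(YX-qXY)^{d}-(1-q)^{d}X^{d}Y^{d}\in T
\]
(with $X=X_1^{g}$, $Y=X_1^{y}$) and computes $\mu_{\alpha}(\mathcal{Q})=-a(1-q)^{d}t_1^{d}t_g^{d}\,v_g^{d}v_{g^{d}}$, which vanishes if and only if $a=0$ (Lemma~\ref{prop:H-poly}). Thus $\mathcal{Q}\in I_H(A_{\sigma,a})\Leftrightarrow a=0$, and equality of $H$-identity ideals forces $a=b$ immediately. With $a=b$ in hand the paper then invokes Theorem~\ref{lema-kassel} directly, without any of the bookkeeping you propose for adjusting $\nu_0(g^{d})$; the paper simply passes from ``$\sigma$ and $\tau$ cohomologous'' to ``$A_{\sigma,a}\cong A_{\tau,a}$'' via Proposition~\ref{iso condition}. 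Your elaborate plan involving $(X_i^{y})^{o(g)}$, the constants $\kappa_{\sigma}$, $c_{\sigma}$, and the variables $X_i^{xy^{l}}$ is therefore not needed for the paper's argument. If you want to follow the paper, replace your last two paragraphs by: (i) use $\mathcal{Q}$ to get $a=b$; (ii) apply Theorem~\ref{lema-kassel}.
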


The first step is to check that the $H$-Galois objects $A_{\sigma,a}$ are indeed polynomial $H$-identity algebras. Here it is useful the fact that every $H$-Galois object is isomorphic to $\ah$ for some normalized convolution invertible 2-cocycle $\alpha$ (see Proposition \ref{prop:H-existence}). It is easy to check that this isomorphism is the $H$-comodule algebra map $F:A_{\sigma,a}\rightarrow \ah$ given by $F(u_xu_y^i)=v_{xy^i}$.  In particular, since 
\[
v_{x}v_{x'}=\sigma(x,x')v_{xx'}, \ v_1=1, \ v_yv_x=\chi(x)v_xv_y \text{ and } v_y^d=av_{g^d},
\]
for all $x,x' \in G$,
any valid relation in $A_{\sigma, a}(\mathbb{G})$ can be transferred to a same-format relation in $\ah$.

To simplify the notation we set:
\[
\begin{aligned}[c]
X&=X_1^g, \\
Y&=X_1^y, \\
E&=X_1^1, 
\end{aligned}
\qquad
\begin{aligned}[c]
t_g&=t_1^g, \\
t_y&=t_1^y, \\
t_1&=t_1^1.
\end{aligned} 
\]
For use below, we first record an easy lemma.
\begin{lemma}\label{qcommute}
	Fix $\zeta$ a primitive $m$-th root of unit and let $z$ and $w$ two variables such that  $zw=\zeta wz$. Then
	\[
	(z+w)^m=z^m+w^m.
	\]
\end{lemma}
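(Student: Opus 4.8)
The plan is to prove the general quantum binomial theorem for the pair $z,w$ and then specialize. For a formal parameter $q$ set $[j]_q = 1 + q + \cdots + q^{j-1}$, $[m]_q! = [1]_q [2]_q \cdots [m]_q$, and let $\binom{m}{k}_q = [m]_q!/([k]_q!\,[m-k]_q!)$ be the Gaussian binomial coefficients. Recall that these are polynomials in $q$ with nonnegative integer coefficients (so the quotient makes sense even when some $[j]_q$ would vanish after specialization), satisfying $\binom{m}{0}_q = \binom{m}{m}_q = 1$ and the Pascal-type recurrence $\binom{m}{k}_q = q^k \binom{m-1}{k}_q + \binom{m-1}{k-1}_q$.

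First I would show, by induction on $m$, that in any associative algebra containing elements $z,w$ with $zw = qwz$ one has $(z+w)^m = \sum_{k=0}^{m} \binom{m}{k}_q\, w^k z^{m-k}$. The inductive step multiplies the expansion of $(z+w)^{m-1}$ on the left by $z+w$, uses the elementary identity $z\,w^k = q^k w^k z$ (obtained by moving $z$ past each copy of $w$), reindexes one of the two resulting sums, and collects coefficients via the Pascal recurrence above; this is a routine computation.

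Finally I would specialize $q = \zeta$. If $m = 1$ the statement is trivial, so assume $m \geq 2$, whence $\zeta \neq 1$ and $\binom{m}{k}_q = \prod_{j=1}^{k}(1 - q^{m-j+1}) \big/ \prod_{j=1}^{k}(1 - q^j)$. For $0 < k < m$ the numerator contains the factor $1 - q^m$, which vanishes at $q = \zeta$ since $\zeta^m = 1$, while the denominator $\prod_{j=1}^{k}(1 - \zeta^j)$ is nonzero because $\zeta^j \neq 1$ for $1 \leq j \leq k < m$, as $\zeta$ is a \emph{primitive} $m$-th root of unity. Hence $\binom{m}{k}_\zeta = 0$ for all $0 < k < m$, and the expansion collapses to $(z+w)^m = z^m + w^m$.

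All of the steps are standard; the only points needing care are keeping the direction of the relation $zw = \zeta wz$ consistent throughout the expansion so that the powers of $q$ land correctly in the Pascal recurrence, and checking that the denominator of the Gaussian binomial coefficient does not vanish at $\zeta$ — which is exactly why the degenerate case $m = 1$ (where $\zeta = 1$) must be separated off first.
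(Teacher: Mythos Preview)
Your argument is correct and follows essentially the same route as the paper's proof: both invoke the quantum binomial expansion and then observe that for $0<k<m$ the Gaussian binomial $\binom{m}{k}_\zeta$ has the factor $1-\zeta^m=0$ in its numerator while the denominator is nonzero because $\zeta$ is primitive. The only difference is that the paper cites the $q$-binomial theorem from a reference, whereas you sketch its inductive proof and separate off the trivial case $m=1$; neither changes the substance of the argument.
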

\begin{proof}
From \cite[Proposition IV.2.2]{kasselbook}, we have the $\zeta$-analogue of the Binomial Theorem,
\[
(z+w)^m = \sum_{i=0}^m {m \choose i}_{\zeta} z^{i}w^{m-i},
\]
where
\[
{m\choose i}_{\zeta} = \frac{(\zeta^m-1)\cdots(\zeta^{m-i+1}-1)}{(\zeta^i-1)\cdots(\zeta-1)}
\]
is the $\zeta$-binomial. Obviously, ${m\choose 0}_\zeta = {m \choose m}_\zeta=1$ and, whenever $0 < i < m$, the factor $\zeta^m-1=0$ remains in the numerator of ${m\choose i}_\zeta$ while no factor of the denominator is null. Therefore, ${m\choose i}_\zeta=0$ in this case.
\end{proof}

\begin{proposition}\label{prop:H-PI}
	Let $H$ be a non-semisimple monomial Hopf algebra and $A_{\sigma, a}{(\mathbb{G})}$ an $H$-Galois object. Then 
	\[
	\mathcal{P}=\sum_{\mathfrak{s} \in S_3} \sgn(\mathfrak{s})E_{\mathfrak{s}(1)}X_{\mathfrak{s}(2)}Y^d_{\mathfrak{s}(3)}
	\]
	is a polynomial $H$-identity for $A_{\sigma, a}{(\mathbb{G})}$. 
\end{proposition}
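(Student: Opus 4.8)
The plan is to reduce the statement to a single computation via Theorem~\ref{mu}. By Proposition~\ref{prop:H-existence} --- or through the explicit $H$-comodule algebra isomorphism $F(u_xu_y^i)=v_{xy^i}$ recorded just before Lemma~\ref{qcommute} --- the Galois object $A_{\sigma,a}(\mathbb G)$ is isomorphic to $\ah$ for a suitable normalized convolution invertible $2$-cocycle $\alpha$, so $I_H(A_{\sigma,a}(\mathbb G))=I_H(\ah)=\ker\mu_\alpha$, and it is enough to show $\mu_\alpha(\mathcal P)=0$ in $S\otimes\ah$.

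First I would compute the images under $\mu_\alpha$ of the symbols occurring in $\mathcal P$, reading off $\Delta(1)=1\otimes1$, $\Delta(g)=g\otimes g$ and $\Delta(y)=1\otimes y+y\otimes g$: this gives $\mu_\alpha(E)=t_1\otimes v_1=t_1\otimes 1_{\ah}$, $\mu_\alpha(X)=t_g\otimes v_g$ and $\mu_\alpha(Y)=t_1\otimes v_y+t_y\otimes v_g$. The crucial value is $\mu_\alpha(Y^d)=\mu_\alpha(Y)^d$: putting $z=t_1\otimes v_y$ and $w=t_y\otimes v_g$, the relation $v_yv_g=\chi(g)v_gv_y$ (transferred from $A_{\sigma,a}(\mathbb G)$) together with the commutativity of $S$ gives $zw=q\,wz$, where $q=\chi(g)$ is a primitive $d$-th root of unity, so Lemma~\ref{qcommute} collapses the power to $\mu_\alpha(Y^d)=z^d+w^d=t_1^d\otimes v_y^d+t_y^d\otimes v_g^d$.

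Next I would verify that $\mu_\alpha(E)$, $\mu_\alpha(X)$ and $\mu_\alpha(Y^d)$ commute pairwise in $S\otimes\ah$. The element $\mu_\alpha(E)=t_1\otimes 1_{\ah}$ is central, so only $[\mu_\alpha(X),\mu_\alpha(Y^d)]$ needs to be examined; as $S$ is commutative this amounts to checking that $v_g$ commutes in $\ah$ with both $v_y^d$ and $v_g^d$. The latter is clear, and the former follows by iterating $v_gv_y=q^{-1}v_yv_g$ to $v_gv_y^d=q^{-d}v_y^dv_g=v_y^dv_g$, using $q^d=1$. Hence $[\mu_\alpha(X),\mu_\alpha(Y^d)]=(t_gt_1^d)\otimes[v_g,v_y^d]+(t_gt_y^d)\otimes[v_g,v_g^d]=0$.

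Finally, $\mathcal P$ is an alternating sum of degree $3$ built from $E$, $X$ and $Y^d$, so the algebra homomorphism $\mu_\alpha$ sends it to the corresponding alternating sum of ordered products of the three pairwise commuting elements $\mu_\alpha(E),\mu_\alpha(X),\mu_\alpha(Y^d)$; every such product equals $\mu_\alpha(E)\mu_\alpha(X)\mu_\alpha(Y^d)$, so $\mu_\alpha(\mathcal P)=\bigl(\sum_{\mathfrak s\in S_3}\sgn(\mathfrak s)\bigr)\mu_\alpha(E)\mu_\alpha(X)\mu_\alpha(Y^d)=0$, and $\mathcal P\in I_H(A_{\sigma,a}(\mathbb G))$ by Theorem~\ref{mu}. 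The step I expect to be the real obstacle is the evaluation of $\mu_\alpha(Y^d)$: it is exactly Lemma~\ref{qcommute} that eliminates the mixed powers $v_y^jv_g^{d-j}$ $(0<j<d)$, and such terms do not commute with $v_g$, so the argument genuinely rests on the $q$-binomial collapse over the commutative coefficient ring $S$.
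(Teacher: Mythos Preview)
Your proof is correct and follows essentially the same route as the paper: reduce via Theorem~\ref{mu} to showing $\mu_\alpha(\mathcal P)=0$, compute $\mu_\alpha(E),\mu_\alpha(X),\mu_\alpha(Y)$ from the coproduct, and apply Lemma~\ref{qcommute} to collapse $\mu_\alpha(Y)^d$. The only difference is organizational: the paper expands all six terms and cancels down to $t_1^{d+1}t_g(v_gv_y^d-v_y^dv_g)=(1-q^d)t_1^{d+1}t_g\,v_gv_y^d=0$, whereas you observe directly that the three images commute pairwise (using the same ingredient $q^d=1$ for $[v_g,v_y^d]=0$) so the alternating sum vanishes---a slightly cleaner packaging of the identical computation.
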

\begin{proof}
	Using the notation above, it is enough to calculate $\mu_{\alpha}(\mathcal{P})$ observing that
\begin{align*}
\mu_\alpha(E)&=t_1v_1,\\
\mu_\alpha(X)&=t_gv_g,\\
\mu_\alpha(Y)&=t_1v_y+t_yv_g,
\end{align*}
and, since $(t_1v_y)(t_yv_g)=q(t_yv_g)(t_1v_y)$, by Lemma \ref{qcommute},
\[
\mu_\alpha(Y^d)=t_1^d v_y^d+t_y^d v_g^d.
\]
Now, $\mathcal{P}=EXY^d-EY^dX+XY^dE-XEY^d+Y^dEX-Y^dXE$,
\begin{align*}
 \mu_{\alpha}(EXY^d) &= t_1v_1t_gv_g(t_1^dv_y^d+t_y^dv_g^d) \\
  &= t_1^{d+1} t_g v_g v_y^d + t_1 t_g t_y^d v_g^{d+1},
\end{align*}
and the computation of $\mu_{\alpha}$ on the remaining terms of $\mathcal{P}$ is easy and left to the reader. After a few
cancellations,
	\begin{align*}
	\mu_{\alpha}(\mathcal{P})&=t_1^{d+1}t_g(v_gv_y^d-v_y^dv_g) = (1-q^d)t_1^{d+1}t_gv_gv_y^d = 0
	\end{align*}
\end{proof}

The next theorem, a particular case of a result stated only for the type I monomial Hopf algebras in \cite[Theorem 3.4]{kassel}, remains valid for types II-VI with virtually the same proof.

\begin{theorem}\label{lema-kassel}
	Let $A_{\sigma,a}(\mathbb{G})$ and $A_{\tau,a}(\mathbb{G})$ be $H$-Galois objects for some $a\in k$ and $\sigma, \tau \in Z^2(G,k^{\times})$. Then
	\[
	I_H(A_{\sigma,a}(\mathbb{G}))=I_H(A_{\tau,a}(\mathbb{G}))
	\]
	if, and only if, $A_{\sigma,a}(\mathbb{G})$ and $A_{\tau,a}(\mathbb{G})$ are isomorphic as $H$-comodule algebras.
\end{theorem}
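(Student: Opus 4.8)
The plan is to prove both implications separately, with the forward direction being the substantive one. The reverse implication is immediate: if $\phi\colon A_{\sigma,a}(\mathbb{G})\to A_{\tau,a}(\mathbb{G})$ is an isomorphism of $H$-comodule algebras, then for any $H$-comodule algebra map $\mu\colon T\to A_{\sigma,a}(\mathbb{G})$ the composite $\phi\circ\mu$ is an $H$-comodule algebra map into $A_{\tau,a}(\mathbb{G})$, and conversely via $\phi^{-1}$, so the two intersections of kernels defining $I_H$ coincide. Hence $I_H(A_{\sigma,a}(\mathbb{G}))=I_H(A_{\tau,a}(\mathbb{G}))$.

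For the forward direction, I would follow Kassel's strategy for the type I case. First, use Proposition \ref{prop:H-existence} and the explicit isomorphism $F\colon A_{\sigma,a}(\mathbb{G})\to{}^{\alpha}H$ (with $\alpha$ built from $\sigma$ and $a$ as indicated before Proposition \ref{prop:H-PI}) to replace the $H$-Galois objects by twisted comodule algebras ${}^{\alpha}H$ and ${}^{\beta}H$, where $\alpha,\beta$ are the cocycles on $H=A(\mathbb{G})$ coming from $(\sigma,a)$ and $(\tau,a)$ respectively. By Theorem \ref{mu}, the hypothesis becomes $\ker\mu_\alpha=\ker\mu_\beta$ inside $T$. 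The key idea is to exploit specific $H$-identities that detect the structure constants. Concretely, for each pair $x,x'\in G$ the element $X_i^x X_j^{x'}-\sigma(x,x')\,(\text{appropriate relabelling})$ type polynomials, and more importantly differences of the form $X_i^xX_j^{x'} - \sigma(x,x')\sigma(x',x)^{-1}X_j^{x'}X_i^x$ encoding the commutation rules, lie in $\ker\mu_\alpha$. Comparing which such polynomials lie in the common kernel forces $\sigma$ and $\tau$ to agree up to a factor that can be absorbed by a function $\nu\colon G\to k^\times$ with $\nu(1)=1$, i.e. $\sigma=\partial(\nu)\tau$ as cohomologous cocycles.

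The heart of the argument is then to extract enough numerical data from $\ker\mu_\alpha$ to pin down $\sigma$ up to a coboundary and to see that the parameter $a$ plays no role here because it is the same on both sides. One writes down, for a basis $\{x_1,\dots,x_r\}$ of $G$, the relations in ${}^{\alpha}H$ coming from Proposition \ref{prop:condition galois}: $v_xv_{x'}=\sigma(x,x')v_{xx'}$. Translating into $T$ via $\mu_\alpha$, the polynomial $P_{x,x'}=\lambda\,X_1^{x}X_2^{x'}-X_1^{x'}X_2^{x}$ (with suitable scalar $\lambda$ depending on the ratio of structure constants and on commutator signs) is an $H$-identity for ${}^{\alpha}H$ precisely when $\lambda$ equals the ratio $\sigma(x,x')\sigma(x',x)^{-1}$ up to the known character factors. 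Since the same polynomial must be an $H$-identity for ${}^{\beta}H$, we get $\sigma(x,x')\sigma(x',x)^{-1}=\tau(x,x')\tau(x',x)^{-1}$ for all $x,x'$, and a standard cohomological argument (the alternating bicharacter determined by a $2$-cocycle is its class invariant, and over an algebraically closed field of characteristic $0$ the map $H^2(G,k^\times)\to$ alternating bicharacters is injective) yields $\sigma=\partial(\nu)\tau$ for some $\nu$. Adjusting $\nu$ so that $\nu(1)=1$ and checking that necessarily $\nu(g^d)=1$ here — because the $a$'s are equal and, in the cases where $a\ne 0$ can occur, the $Y^d$-relation $v_y^d=av_{g^d}$ forces $b=a\nu(g^d)$ with $b=a\ne0$, giving $\nu(g^d)=1$ — we invoke Proposition \ref{iso condition} with $b=a$ to conclude $A_{\sigma,a}(\mathbb{G})\cong A_{\tau,a}(\mathbb{G})$ as $H$-comodule algebras.

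The main obstacle I anticipate is the bookkeeping needed to isolate, from the T-ideal $\ker\mu_\alpha$, exactly the polynomials that encode the pairwise ratios $\sigma(x,x')/\sigma(x',x)$ cleanly — that is, verifying that such ``commutator-type'' polynomials are $H$-identities for ${}^{\alpha}H$ \emph{if and only if} the scalar matches, not merely ``if''. This requires using the factorization of arbitrary $\mu\colon T\to{}^{\alpha}H$ through $\mu_\alpha$ (the remark before Theorem \ref{mu}) together with the linear independence of the basis $\{v_{xy^i}\}$ to rule out accidental vanishing. Once that ``only if'' is secured, the passage to cohomology and the application of Propositions \ref{iso condition}, \ref{prop:iso2} and \ref{prop:iso3} is routine; the paper's remark that Kassel's proof ``remains valid for types II–VI with virtually the same proof'' signals that no genuinely new phenomenon arises, only that one must check the cohomological rigidity input and the handling of $a$ uniformly across the types.
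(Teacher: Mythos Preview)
Your approach diverges from the paper's in a way that introduces a genuine gap. The paper does \emph{not} attempt to read off the cocycle ratios $\sigma(x,x')/\sigma(x',x)$ directly from explicit $H$-identities. Instead it observes that the twisted group algebra $k^{\sigma}G$ sits inside $A_{\sigma,a}(\mathbb{G})$ as the $kG$-comodule subalgebra generated by the $u_x$ ($x\in G$), builds a commutative diagram
\[
\begin{tikzcd}[column sep=large]
0\ar{r} & I_{kG}(k^{\sigma}G)\ar{r}\ar[hook]{d} & T(X_{kG})\ar{r}{\mu_\alpha}\ar[hook]{d} & S(t_{kG})\otimes k^{\sigma}G\ar[hook]{d}\\
0\ar{r} & I_H(A_{\sigma,a}(\mathbb{G}))\ar{r} & T(X_H)\ar{r}{\mu_\alpha} & S(t_H)\otimes A_{\sigma,a}(\mathbb{G})
\end{tikzcd}
\]
with injective verticals, and deduces $I_{kG}(k^{\sigma}G)=T(X_{kG})\cap I_H(A_{\sigma,a}(\mathbb{G}))$. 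From $I_H(A_{\sigma,a})=I_H(A_{\tau,a})$ one then gets $I_{kG}(k^{\sigma}G)=I_{kG}(k^{\tau}G)$, and the deep input is Aljadeff--Haile \cite[Proposition~2.11]{aljadeffhaile}, valid for \emph{arbitrary} finite groups, which forces $\sigma$ and $\tau$ to be cohomologous. Proposition~\ref{iso condition} then finishes.

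Your route fails for two related reasons. First, the concrete polynomial you propose, $P_{x,x'}=\lambda\,X_1^{x}X_2^{x'}-X_1^{x'}X_2^{x}$, is \emph{never} an $H$-identity for $x\neq x'$: since $x,x'$ are grouplike, $\mu_\alpha(X_i^z)=t_i^{z}v_z$, and the two terms land on the linearly independent $S$-monomials $t_1^{x}t_2^{x'}$ and $t_1^{x'}t_2^{x}$, so nothing cancels. The intended polynomial $X_1^{x}X_2^{x'}-\lambda\,X_2^{x'}X_1^{x}$ only yields information when $xx'=x'x$, i.e.\ for commuting pairs. Second, and more seriously, the ``standard cohomological argument'' you invoke --- that the alternating bicharacter $\sigma(x,x')\sigma(x',x)^{-1}$ determines the class of $\sigma$ in $H^2(G,k^{\times})$ --- is a theorem about \emph{abelian} $G$. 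In a group datum $\mathbb{G}=(G,g,\chi,\mu)$ only $g$ is required to be central; $G$ itself may well be non-abelian, and then two non-cohomologous cocycles can have the same commutator function. This is precisely why the paper outsources the cocycle rigidity to Aljadeff--Haile rather than attempting a bicharacter argument.
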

\begin{proof}
	Denote by $k^{\sigma}G$ the subalgebra of $A_{\sigma, a}(\mathbb{G})$ generated the symbols $u_x \ (x \in G)$ and relations $u_xu_{x'}=\sigma (x,x')u_{xx'}$ and $u_1=1$, for all $x,x' \in G$. If we prove that $I_{kG}(k^{\sigma}G)=I_{kG}(k^{\tau}G)$ then by \cite[Proposition 2.11]{aljadeffhaile} the cocycles $\sigma$ and $\tau$ are cohomologous. Therefore, by Proposition \ref{iso condition} $A_{\sigma,a}(\mathbb{G})$ and $A_{\tau,a}(\mathbb{G})$ are isomorphic as $H$-comodule algebras.
	It remains to prove that $I_{kG}(k^{\sigma}G)=I_{kG}(k^{\tau}G)$.
	
	Consider the diagram:
	\begin{equation*}
	\begin{tikzcd}[column sep=large, row sep=large]
	0\ar{r} & I_{kG}(k^{\sigma}G) \ar{r} \ar{d}[swap]{\imath}	&T(X_{kG})\ar{r}{\mu_{\alpha}}\ar{d}{\imath_T}&S(t_{kG})\otimes k^{\sigma}G \ar{d}{\imath_S}	\\
	0\ar{r} & I_H(A_{\sigma,a}(\mathbb{G})) \ar{r}	&T(X_{H})\ar{r}{\mu_{\alpha}}&S(t_{H})\otimes A_{\sigma,a}(\mathbb{G})
	\end{tikzcd}
	\end{equation*}
	
	The vertical map $\imath_T$ is induced by the inclusion $kG \longrightarrow H$.  It is injective. The map $\imath_S$ is induced by the previous natural inclusion and the comodule algebra inclusion  $k^{\sigma}G \subseteq A_{\sigma,a}(\mathbb{G})$. It sends its generators $t_i^xu_{x'}$ of $S(t_{kG})\otimes k^{\sigma}G$ to itself viewed as an element of $S(t_{H})\otimes A_{\sigma,a}(\mathbb{G})$. Note that the horizontal sequences are exacts by Theorem \ref{mu}. It is straightforward to check that the diagram is commutative. Therefore the restriction  $\imath$ of $\imath_T$ to $I_{kG}(k^{\sigma}G)$ send the latter to $I_H(A_{\sigma,a}(\mathbb{G}))$ and is injective. By this injectivity we have
	\[
	I_{kG}(k^{\sigma}G)=T(X_{kG}) \cap I_H(A_{\sigma,a}(\mathbb{G})).
	\]
	
	Then, as $	I_H(A_{\sigma,a}(\mathbb{G}))=I_H(A_{\tau,a}(\mathbb{G}))$, we have that $I_{kG}(k^{\sigma}G)=I_{kG}(k^{\tau}G)$. This completes the proof.
\end{proof}

For types III, V and VI we have shown that only isomorphism classes $[A_{\sigma, 0}(\mathbb{G})]$ and $[A_{\tau, 1}(\mathbb{G})]$ can occur (Proposition \ref{prop:iso3}). Next, we show that these classes are disjoint. First we need a Lemma.
%that $A_{\sigma, 0}(\mathbb{G})$ and $A_{\tau, 1}(\mathbb{G})$ can never be PI-equivalent. 

\begin{lemma}\label{prop:H-poly}
Let $A_{\sigma, a}(\mathbb{G})$ be an $H$-Galois object. Then 
\[
\mathcal{Q}=(YX-qXY)^d-(1-q)^dX^dY^d
\] is a polynomial $H$-identity for $A_{\sigma, a}(\mathbb{G})$ if, and only if, $a=0$.
\end{lemma}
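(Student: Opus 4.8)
The strategy mirrors the proof of Proposition \ref{prop:H-PI}: translate the question into a computation of $\mu_\alpha(\mathcal{Q})$ inside $S\otimes\ah$, using that $A_{\sigma,a}(\mathbb{G})\cong\ah$ and that the relevant relations in $\ah$ are $v_xv_{x'}=\sigma(x,x')v_{xx'}$, $v_yv_x=\chi(x)v_xv_y$, and crucially $v_y^d=av_{g^d}$, so that the parameter $a$ enters only through this last relation. By Theorem \ref{mu}, $\mathcal{Q}\in I_H(A_{\sigma,a}(\mathbb{G}))$ if and only if $\mu_\alpha(\mathcal{Q})=0$, so it suffices to compute this element and see that it vanishes precisely when $a=0$.

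First I would record, as in Proposition \ref{prop:H-PI}, that $\mu_\alpha(X)=t_gv_g$ and $\mu_\alpha(Y)=t_1v_y+t_yv_g$. I would then compute $\mu_\alpha(YX-qXY)$: since $v_yv_g=\chi(g)v_gv_y=qv_gv_y$ and $v_gv_g=\sigma(g,g)v_{g^2}$, the two cross terms combine and one gets an expression of the form $\mu_\alpha(YX-qXY)=(1-q)\,t_1t_g v_g v_y+(\text{term in }t_y t_g v_{g^2})$; a short calculation should show the $v_{g^2}$-term cancels because of the factor $q$, leaving $\mu_\alpha(YX-qXY)=(1-q)t_1t_g v_g v_y$ (up to reordering the commuting scalars $t_1,t_g$ in $S$ and the $v$'s, which one must do carefully). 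Raising this to the $d$-th power, and using that $v_gv_y$ and the remaining symbols $q$-commute so that Lemma \ref{qcommute} applies with $\zeta=q$ where needed, I would obtain $\mu_\alpha((YX-qXY)^d)=(1-q)^d t_1^d t_g^d\,(v_gv_y)^d$. On the other hand $\mu_\alpha(X^dY^d)=t_g^d v_g^d\big(t_1^d v_y^d + t_y^d v_g^d\big)$, where I used $\mu_\alpha(Y^d)=t_1^dv_y^d+t_y^dv_g^d$ from the proof of Proposition \ref{prop:H-PI}. Rewriting $(v_gv_y)^d$ in terms of $v_g^d v_y^d$ (pulling all $v_g$'s past the $v_y$'s picks up powers of $q$, and $v_g^d=\sigma(g,g)\cdots v_{g^d}$ is a scalar multiple of $v_{g^d}$), I would match the $v_y^d$-contributions on both sides; these should agree identically, so $\mu_\alpha(\mathcal{Q})$ reduces to the leftover $v_g^{2d}$-type term coming only from the $v_y^d=av_{g^d}$ substitution inside $\mu_\alpha(X^dY^d)$, namely a nonzero scalar multiple of $a\cdot t_g^d t_y^d v_g^{2d}$ (possibly plus a companion term with $t_1^d$ after substituting $v_y^d=av_{g^d}$ in the first piece as well). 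Thus $\mu_\alpha(\mathcal{Q})$ is a nonzero element of $S\otimes\ah$ multiplied by $a$, and it vanishes exactly when $a=0$.

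The main obstacle is bookkeeping: one must be scrupulous about the order of factors, since in $S\otimes\ah$ the $t$-symbols are central but the $v$-symbols are not, and the binomial-type expansions of $(YX-qXY)^d$ and of $\mu_\alpha(Y)^d$ each generate several terms whose scalars ($q$-powers and values of $\sigma$) have to be tracked so that the spurious terms genuinely cancel and the residual term is genuinely nonzero (in particular, using that $q$ is a primitive $d$-th root of unity so $1-q^j\neq 0$ for $0<j<d$, and that $v_{g^d}\neq 0$ with the structure constants of $\ah$ nonzero). For the ``only if'' direction one then simply notes that when $a\neq 0$ the residual coefficient of $v_{g^d}$-type term is a nonzero scalar, so $\mu_\alpha(\mathcal{Q})\neq 0$ and $\mathcal{Q}\notin I_H(A_{\sigma,a}(\mathbb{G}))$; for the ``if'' direction, $a=0$ kills the residual term and $\mu_\alpha(\mathcal{Q})=0$. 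I would present the $a=0$ case first (it makes the cancellations transparent, since then $v_y^d=0$ and $\mathcal{Q}$ becomes literally $(YX-qXY)^d$ evaluated where the cross-terms telescope) and then observe that reinstating $a$ contributes exactly the single nonvanishing term identified above.
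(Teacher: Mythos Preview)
Your overall strategy is right, but the key computation of $\mu_\alpha(YX-qXY)$ is backwards, and this error propagates so that the rest of the argument would not actually close. From $\mu_\alpha(Y)=t_1v_y+t_yv_g$ and $\mu_\alpha(X)=t_gv_g$ one gets
\[
\mu_\alpha(YX)=q\,t_1t_g\,v_gv_y+t_yt_g\,v_g^2,\qquad \mu_\alpha(XY)=t_1t_g\,v_gv_y+t_yt_g\,v_g^2,
\]
so in $\mu_\alpha(YX)-q\,\mu_\alpha(XY)$ it is the $t_1t_g\,v_gv_y$ contributions that cancel (both carry the factor $q$), while the $t_yt_g\,v_g^2$ contributions survive with coefficient $1-q$. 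Hence
\[
\mu_\alpha(YX-qXY)=(1-q)\,t_yt_g\,v_g^2,
\]
not $(1-q)\,t_1t_g\,v_gv_y$ as you claim. With your (incorrect) surviving term, $\mu_\alpha((YX-qXY)^d)$ would involve $(v_gv_y)^d=q^{\binom{d}{2}}v_g^dv_y^d$, and after subtracting $(1-q)^d\mu_\alpha(X^dY^d)$ you would be left with a nonzero $t_y^dt_g^d\,v_g^{2d}$ term that does \emph{not} carry the factor $a$; so $\mu_\alpha(\mathcal{Q})$ would fail to vanish even when $a=0$, and the ``if'' direction collapses.

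With the correct expression the argument is in fact cleaner than you anticipate: no $v_y$ appears in $\mu_\alpha(YX-qXY)$, so raising to the $d$-th power gives $(1-q)^d t_y^dt_g^d\,v_g^{2d}$ directly, with no $q$-reordering needed. Subtracting $(1-q)^d\mu_\alpha(X^dY^d)=(1-q)^d t_g^d v_g^d(t_1^dv_y^d+t_y^dv_g^d)$, the $t_y^dt_g^d v_g^{2d}$ pieces cancel on the nose and one is left with $\mu_\alpha(\mathcal{Q})=-(1-q)^d t_1^dt_g^d\,v_g^d v_y^d=-a(1-q)^d t_1^dt_g^d\,v_g^d v_{g^d}$, which is nonzero precisely when $a\neq 0$. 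This is exactly the paper's computation; your plan would match it once the sign of the cancellation in the first step is fixed.
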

\begin{proof}
By Theorem \ref{mu}, it is enough to check that $\mu_{\alpha}(\mathcal{Q})=0$ if, and only if, $a=0$. Proceeding as in the proof of Proposition \ref{prop:H-PI}, we have
\begin{align*}
\mu_\alpha(YX-qXY)&=(1-q)t_yt_gv_g^2,\\
\mu_\alpha(E^d)&=t_1^du_1,\\
\mu_\alpha(X^d)&=t_g^dv_g^d,
\end{align*}
and
\begin{align*}
\mu_\alpha((YX-qXY)^d)&=(1-q)^dt_y^dt_g^dv_g^{2d}.
\end{align*}
Therefore,
\begin{align*}
\mu_{\alpha}(\mathcal{Q})&=(1-q)^dt_g^dvt_y^dv_g^{2d}-(1-q)^dt_g^dv_g^d(t_1^d v_y^d+t_y^d v_g^d)\\
&=-a(1-q)^dt_1^dt_g^du_g^du_{g^d}.
\end{align*}
Since $(1-q)^dt_1^dt_g^du_g^du_{g^d} \neq 0$, the result follows.
%we conclude that $\mu_{\alpha}(\mathcal{Q})=0$ if, and only, if $a=0$.
\end{proof}

%It is well known that two non-isomorphic algebras may have the same set of polynomial identities (example given in the introduction). This is not the case
%for the $H$-Galois objects $A_{\sigma,a}(\mathbb{G})$ of types III, V and VI.

\begin{proposition} \label{prop:polynomial}
	Let $\mathbb{G}$ be a group datum of type III, V or VI,  $A_{\sigma,a}(\mathbb{G})$ and $A_{\tau,b}(\mathbb{G})$ be $H$-Galois objects.
If $A_{\sigma,a}(\mathbb{G})\cong A_{\tau,b}(\mathbb{G})$ then
%	 If
%	\[
%	I_H(A_{\sigma,a}(\mathbb{G}))= I_H(A_{\tau,b}(\mathbb{G})),
%	\]
$a=b$.
\end{proposition}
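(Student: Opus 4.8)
The plan is to combine Lemma \ref{prop:H-poly}, which detects the parameter up to its vanishing, with the elementary fact that isomorphic $H$-comodule algebras satisfy the same polynomial $H$-identities, and finally to use Proposition \ref{prop:iso3} to reduce to the normalized parameters $0$ and $1$.

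First I would check that an isomorphism $\Phi\colon A_{\sigma,a}(\mathbb{G})\to A_{\tau,b}(\mathbb{G})$ of $H$-comodule algebras forces $I_H(A_{\sigma,a}(\mathbb{G}))=I_H(A_{\tau,b}(\mathbb{G}))$: post-composition $\mu\mapsto\Phi\circ\mu$ is a bijection between the $H$-comodule algebra maps $T\to A_{\sigma,a}(\mathbb{G})$ and those $T\to A_{\tau,b}(\mathbb{G})$, and $\ker(\Phi\circ\mu)=\ker\mu$ because $\Phi$ is injective, so the two intersections $\bigcap_\mu\ker\mu$ coincide. Feeding the polynomial $\mathcal{Q}=(YX-qXY)^d-(1-q)^dX^dY^d$ of Lemma \ref{prop:H-poly} into this equality, the lemma gives $\mathcal{Q}\in I_H(A_{\sigma,a}(\mathbb{G}))\iff a=0$ and $\mathcal{Q}\in I_H(A_{\tau,b}(\mathbb{G}))\iff b=0$, whence $a=0$ exactly when $b=0$.

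To conclude $a=b$, I would invoke Proposition \ref{prop:iso3}: since $\mathbb{G}$ is of type III, V or VI, every $A(\mathbb{G})$-Galois object is isomorphic as an $H$-comodule algebra to one with parameter $0$ or $1$, so it suffices to treat $a,b\in\{0,1\}$, and there the equivalence $a=0\iff b=0$ is precisely $a=b$.

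I do not anticipate a real obstacle. The only point needing attention is that Lemma \ref{prop:H-poly} genuinely separates $a=0$ from $a\neq 0$ in the situations at hand: its computation leaves $\mu_\alpha(\mathcal{Q})$ equal to $-a$ times a monomial that is nonzero because $g^d\neq 1$, i.e. because $d<n$ — and $d<n$ holds throughout types III, V and VI, so the argument applies uniformly in all three cases.
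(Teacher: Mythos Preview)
Your proof is correct and follows essentially the same approach as the paper: both use that isomorphic $H$-comodule algebras share their polynomial $H$-identities, invoke Lemma~\ref{prop:H-poly} so that $\mathcal{Q}$ distinguishes $a=0$ from $a\neq 0$, and reduce via Proposition~\ref{prop:iso3} to parameters in $\{0,1\}$. The paper argues by contradiction and performs the reduction first while you proceed directly and reduce at the end, but the content is the same; your added justifications (the bijection $\mu\mapsto\Phi\circ\mu$ and the role of $d<n$) are welcome clarifications the paper leaves implicit.
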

\begin{proof}
Suppose that $a \neq b$. By Proposition \ref{prop:iso3}, we can assume that $a=0$ and $b=1$. Since $A_{\sigma,a}(\mathbb{G})\cong A_{\tau,b}(\mathbb{G})$ then $I_H(A_{\sigma,a}(\mathbb{G}))= I_H(A_{\tau,b}(\mathbb{G}))$ hence, it follows from Lemma \ref{prop:H-poly} that $\mathcal{Q}$ is a polynomial $H$-identity for $A_{\sigma,0}(\mathbb{G})$ but it is not a polynomial $H$-identity for $A_{\tau,1}(\mathbb{G})$, which is absurd.
% Therefore $I_H(A_{\sigma,0}(\mathbb{G}))\neq  I_H(A_{\tau,1}(\mathbb{G}))$.
\end{proof}

%To conclude this section we prove the main result, 
With this setup, the proof of Theorem \ref{thm:main} is straightforward.
\begin{proof}
The proof for type I was given by Kassel \cite[Theorem 3.4]{kassel}, so we may assume that $H$ is of types II through VI.

It follows from Proposition \ref{prop:iso3} that $A\cong A_{\sigma, a}(\mathbb{G})$ and $B\cong A_{\tau, b}(\mathbb{G})$ for some 2-cocycles $\sigma,\tau$ and $a,b\in k$.

For types II and IV, $a=b=0$ due to Proposition \ref{prop:iso2}, and the result follows at once from Theorem \ref{lema-kassel}.

For types III, V and VI, $a,b\in\{0,1\}$ thanks to Proposition \ref{prop:iso3}. Since $I_H(A_{\sigma,a}(\mathbb{G}))= I_H(A_{\tau,b}(\mathbb{G}))$, as in the proof of Proposition \ref{prop:polynomial}, we get $a=b$. Once again, the result follows from Theorem \ref{lema-kassel}.
\end{proof}

\bibliographystyle{plain}
\addcontentsline{toc}{chapter}{Bibliografia}
\bibliography{biblio}
\clearpage \thispagestyle{empty}
\end{document}